\documentclass[12pt]{article}

\usepackage{amsmath}
\usepackage{graphicx}

\usepackage{amsthm}
\usepackage{cite}

\theoremstyle{definition}
\newtheorem{thm}{Theorem}[section]
\newtheorem{lem}[thm]{Lemma}

\newtheorem{claim}[thm]{Claim}

\newcommand{\dist}{\ensuremath{\mathrm{dist}}}

\title{An Upper Bound for the Double Domination Number in Maximal Outerplanar Graphs}
\author{Toru Araki}
\date{Faculty of Informatics, Gunma University \\ Maebashi, Gunma, Japan}

\begin{document}
\maketitle

\begin{abstract}
  In a graph $G$, a vertex dominates itself and its neighbors.
  A subset $S$ of vertices of $G$ is a double dominating set of $G$ if
  every vertex is dominated by at least two vertices in $S$.
  The double domination number $\gamma_{\times 2}(G)$ of $G$ is the
  minimum cardinality of a double dominating set of $G$.
  In this paper, we prove that, for a maximal outerplanar graph $G$,
  the double domination number $\gamma_{\times 2}(G)$ is at most
  $(n+k)/2$, where $k$ is the number of pairs of consecutive vertices
  on the outer cycle but at distance at least 3.
  Although this bound was previously proposed by Abd Aziz, Rad and
  Kamarulhaili (A note on the double domination number in maximal
  outerplanar and planar graphs, RAIRO Operations Research, 56 (2022)
  3367--3371), their proof was found to be incomplete.
  In this paper we establish the validity of this result by providing
  a complete proof.
\end{abstract}

\section{Introduction}
\label{sec:introduction}

All graphs considered in this paper are finite, simple, and
undirected.
For a graph $G$, $V(G)$ and $E(G)$ are the sets of vertices and edges
of $G$, respectively.
The distance between vertices $u$ and $v$ is the minimum length of a
path between $u$ and $v$, and is denoted by $\dist_{G}(u, v)$.
For nonadjacent vertices $u$ and $v$, $G+uv$ is the graph obtained by
adding an edge $uv$.
For a set $S$ of vertices, $G-S$ is the graph obtained by deleting
vertices in $S$ and their incident edges.
The \emph{open neighborhood} $N_{G}(v)$ of a vertex $v$ is the set of
vertices that are adjacent to $v$, while the \emph{closed
  neighborhood} of $v$ is the set $N_{G}[v] = N_{G}(v) \cup \{v\}$.
The degree of $v$, denoted by $\deg_{G} v$, is defined by $\deg_{G} v
= |N_{G}(v)|$.
A subset $S \subseteq V(G)$ is a \emph{dominating set} of $G$ if, for 
any vertex $v$ in $V(G) \setminus S$, it holds $|N_{G}[v] \cap S| \geq 1$.
The \emph{domination number} $\gamma(G)$ is the minimum cardinality of
a dominating set of $G$.
A subset $S \subseteq V(G)$ is a \emph{double dominating set} of $G$
if, for any vertex $v$ in $V(G) \setminus S$, it holds $|N_{G}[v] \cap S| \geq 2$.
The \emph{double domination number} $\gamma_{\times 2}(G)$ is the
minimum cardinality of a double dominating set of $G$.
For a comprehensive treatment of domination in graphs, the reader is
referred to the
books~\cite{haynes21:_domin_graph_core,haynes20:_topic_domin_graph,haynes21:_struc_domin_graph,haynes98:_domin_graph,haynes98b}.

Harary and Haynes~\cite{harary00} defined $k$-tuple domination which
is a generalization of domination.
A subset $S \subseteq V(G)$ is a \emph{$k$-tuple dominating set} of $G$
if, for any vertex $v$ in $V(G) \setminus S$, it holds $|N_{G}[v] \cap S| \geq k$.
When $k=1$, a $k$-tuple dominating set is a dominating set.
A $k$-tuple dominating set where $k=2$ is called a \emph{double
  dominating set}.
The concept of double domination in graphs was studied in
\cite{henning05:_graph,blidia06,harant05,hajian19}.
Henning~\cite{henning05:_graph} showed that $\gamma_{\times 2}(G) \leq
3n/4$ for any graph $G$ which is not a cycle of 5 vertices.

A \emph{plane embedding} of a planar graph $G$ is an embedding of $G$
in a plane such that the edges of $G$ do not intersect each other.
A planar graph with a plane embedding is called a \emph{plane graph}.
A graph $G$ is \emph{outerplanar} if it has an embedding in the plane
such that all vertices belong to the boundary of its outer face (the
unbounded face).
An outerplanar graph $G$ is \emph{maximal} if $G+uv$ is not
outerplanar for any two nonadjacent vertices $u$ and $v$.
A maximal outerplanar graph has an embedding such that all inner faces
are triangles.

Domination in maximal outerplanar graphs has been extensively
studied~\cite{matheson96:_domin,campos13,tokunaga13:_domin}.
Matheson and Tarjan~\cite{matheson96:_domin} proved a tight upper bound for
the domination number on the class of \emph{triangulated discs}:
graphs that have an embedding in the plane such that all of their
faces are triangles, except possibly one.  They proved that
$\gamma(G) \leq n/3$ for any $n$-vertex triangulated disc, and also
showed that this bound is tight.  For maximal outerplanar graphs,
better upper bounds are obtained.  Campos and
Wakabayashi~\cite{campos13} showed that if $G$ is a maximal
outerplanar graph of $n$ vertices, then $\gamma(G) \leq (n+k)/4$ where
$k$ is the number of vertices of degree 2.  Tokunaga proved the same
result independently in~\cite{tokunaga13:_domin}.  For results on
other types of domination in maximal outerplanar graphs, we refer the
reader
to~\cite{aita24,araki18,dorfling17:_total,lemanska17:_total,lemanska19:_convex,henning25:_dijun}.

Zhuang~\cite{zhuang22:_doubl} studied double domination in maximal
outerplanar graphs and obtained the following results.

\begin{thm}[\cite{zhuang22:_doubl}]
  For a maximal outerplanar graph $G$ of $n \geq 3$ vertices,
  $\gamma_{\times 2}(G) \leq 2n/3$.
\end{thm}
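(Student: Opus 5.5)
We prove $\gamma_{\times 2}(G)\le 2n/3$ by strong induction on $n$. The base cases $n=3,4,5$ are checked directly: for $n=3$ any two vertices of the triangle double dominate it. For $n=4,5$, two or three suitably chosen vertices (for example, both ends of the chord when $n=4$) work. In each case the size is at most $\lfloor 2n/3\rfloor$.

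For the inductive step with $n\ge 6$, we use the standard decomposition tool for maximal outerplanar graphs, following the approach of Matheson and Tarjan and of Campos and Wakabayashi. Every maximal outerplanar graph with $n\ge 6$ has a chord $uv$ that separates off a subgraph $G_1$ containing exactly $3$, $4$ or $5$ vertices other than $u$ and $v$. Here $G_1$ is induced by the vertices on one side of $uv$, together with $u$ and $v$.

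Let $G_2$ be the other side, again including $u$ and $v$. Since chords separate maximal outerplanar graphs into maximal outerplanar graphs, both $G_1$ and $G_2$ are maximal outerplanar. The outerplanar analogue of the Matheson–Tarjan counting argument gives the existence of such a chord: take a chord $uv$ cutting off a minimal side with at least $3$ inner vertices, so that $m$ vertices are removed with $3\le m\le 5$.

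Removing these $m$ vertices leaves $G_2=G-(V(G_1)\setminus\{u,v\})$ with $n-m\ge 3$ vertices. By induction $G_2$ has a double dominating set $S_2$ with $|S_2|\le 2(n-m)/3$. We then need to add a set $T$ of at most $2m/3$ new vertices, taken from the $m$ removed ones, such that $S_2\cup T$ double dominates $G$. This means $|T|\le 2$ when $m=3$ or $m=4$, and $|T|\le 3$ when $m=5$.

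The vertices of $G_2$ remain doubly dominated because $S_2\subseteq V(G)$ and adjacency is only increased. So we only need to handle the $m$ removed vertices. For these we may use $u$ and $v$ if they lie in $S_2$, but we must not assume that they do.

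The key step, and the main obstacle, is the case analysis over the possible shapes of $G_1$. These are the triangulations of a hexagon, heptagon or octagon with $uv$ as a boundary edge, up to symmetry. For each shape we need a $T$ of the required size that works no matter which of $u,v$ lie in $S_2$. Equivalently, $T$ alone must double dominate all $m$ removed vertices, or must do so with the help of $u$ and $v$ only where that is forced.

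For $m=3$, the two vertices of $T$ must between them double dominate all three removed vertices without help from $u,v$. This requires a pair of adjacent removed vertices that together dominate the third. That holds if some removed vertex is adjacent to the other two, which seems to be true for every triangulation of a pentagon-like fan region. Still, it needs checking, and when it fails the choice of chord must be adjusted.

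If some configuration cannot be handled, I would first try strengthening the induction hypothesis. One option is to require that $S_2$ contain a prescribed vertex of the separating edge. Another is to choose the chord more cleverly, for instance a chord $uv$ where $u$ is adjacent to all removed vertices, so that adding $u$ to $T$ makes things work. I expect this delicate case analysis, especially the $m=4$ and $m=5$ configurations, to be the bulk of the work.
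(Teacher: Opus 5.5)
The paper only quotes this theorem from Zhuang and gives no proof of it, so I judge your proposal on its own. Your base cases and the existence of a chord cutting off $3\le m\le 5$ vertices are fine. But there is a genuine gap. Everything rests on the step you call the main obstacle, and that step, as you formulate it, is false for shapes your minimal-chord choice actually produces. You require a set $T$ of at most $\lfloor 2m/3\rfloor$ new vertices that works whichever of $u,v$ lie in $S_2$.

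For $m=3$, let the cut-off pentagon $u,a,b,c,v$ be triangulated by the chords $bu$ and $bv$. Then $a$ and $c$ have degree $2$ in $G$, with $N[a]=\{a,u,b\}$ and $N[c]=\{b,c,v\}$. If $u,v\notin S_2$, you must add at least three vertices (even if $T$ may contain $u$ or $v$), but your budget is two.

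Your criterion for $m=3$ is also off. That some removed vertex is adjacent to the other two is automatic (the middle one, via the outer path), but it does not suffice. A removed vertex outside $T=\{x,y\}$ must be adjacent to both $x$ and $y$, so you need the chord $ac$, which exists in only two of the five triangulations.

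For $m=4$ it is worse. The hexagon $u,a,b,c,d,v$ with chords $ub,uc,cv$ is produced by your choice of chord, since the sub-chords $uc$ and $cv$ cut off $2$ and $1$ vertices. Here $a$ and $d$ are degree-$2$ vertices with disjoint closed neighbourhoods $\{a,u,b\}$ and $\{c,d,v\}$. So you need four new vertices if $u,v\notin S_2$, and three if exactly one of them is in $S_2$, against a budget of two. Prescribing one end of $uv$ in $S_2$ therefore does not rescue this case. That strengthened hypothesis also already fails for the $4$-vertex graph, where no double dominating set of size $2$ contains a vertex of degree $2$.

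If you keep the inductive route, the missing idea is to stop demanding that $T$ work for an arbitrary $S_2$. Instead, delete a set chosen so that the reduced graph $G'$ has a vertex of degree $2$, whose closed neighbourhood then forces two vertices into $S'$; this is the device used throughout this paper.

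In the $m=3$ shape above, delete only $a$ and $c$. Now $b$ has degree $2$ in $G'$, so $S'$ contains two of $b,u,v$. Adding whichever of these three is missing always completes it, giving $|S|\le 2(n-2)/3+1$. In the $m=4$ shape, delete $a,b,d$. Now $c$ has neighbours $u,v$ only, and in each case two further vertices suffice, giving $|S|\le 2(n-3)/3+2$. You would still have to do this for every shape with $m\le 5$.

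No case analysis is actually needed. A maximal outerplanar graph has a proper $3$-colouring: remove a degree-$2$ vertex, colour the rest, and give it the colour missing on its two neighbours, which are adjacent. Every vertex lies on a triangle, and each triangle meets all three colour classes. Hence the union $S$ of any two colour classes is a double dominating set. A vertex outside $S$ has a neighbour of each chosen colour, and a vertex in $S$ has a neighbour of the other chosen colour. Taking the two smallest classes gives $|S|\le 2n/3$.
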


\begin{thm}[\cite{zhuang22:_doubl}]
  \label{thm:zhuang}
  For a maximal outerplanar graph $G$ of $n \geq 3$ vertices,
  $\gamma_{\times 2}(G) \leq (n+t)/2$, where $t$ is the number of
  vertices of degree 2.
\end{thm}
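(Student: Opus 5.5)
We would prove the bound by induction on $n$, using the weak dual of $G$, i.e.\ the tree $T$ whose nodes are the inner triangular faces, two being adjacent when they share a chord. The base cases are direct. For $n=3$ we have $t=3$ and $\gamma_{\times 2}=2\le 3$. For $n=4$ we have $t=2$, and the two degree-$3$ vertices form a double dominating set, so $\gamma_{\times 2}=2\le 3$. For small $n$ (say $n\le 6$) we would verify the bound by listing all maximal outerplanar graphs.

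For the induction step, take a longest path in $T$ and look at its end. Since the path is longest, the triangles near the end form a bounded set of configurations. Each such configuration cuts off a piece $H$ along a chord $xy$, containing only a few vertices (at most about $4$) besides $x$ and $y$. Let $G'=G-(V(H)\setminus\{x,y\})$; it is again maximal outerplanar, with outer cycle obtained by shortcutting through $xy$. The goal in each configuration is a local inequality: if we delete $p$ vertices and $t'$ is the number of degree-$2$ vertices of $G'$, we add at most $q$ vertices of $H$ to a minimum double dominating set $S'$ of $G'$, where
\[
2q \le p + (t - t').
\]
This inequality, together with the induction hypothesis $|S'|\le (n-p+t')/2$, gives $|S'|+q\le (n+t)/2$.

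The typical moves are as follows.
\begin{itemize}
\item A fan of leaves of $T$ at a single vertex $x$ contributes several degree-$2$ vertices of $G$. Adding $x$, and possibly one more vertex, to $S'$ double dominates all of them. Here $t-t'$ is large, which pays for the added vertices.
\item A single ear, i.e.\ a degree-$2$ vertex $v$ adjacent to $x$ and $y$, reduces to the case where $x$ or $y$ has degree $2$ in $G'$. Then $t'$ may equal $t$, but we delete two vertices and add one.
\end{itemize}
We would also use the flexibility to choose which end vertex of the cut chord, $x$ or $y$, is forced into $S'$. Concretely, we would strengthen the hypothesis to state that for any degree-$2$ vertex $w$ of $G'$ (or any outer edge), there is an optimal-bound double dominating set containing a neighbor of $w$. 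This lets us combine with $H$ without double-counting.

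The main obstacle is the bookkeeping of $t'$ versus $t$. Deleting $H$ can create a new degree-$2$ vertex in $G'$ (when $x$ or $y$ drops to degree $2$), which increases $t'$ and hurts the inequality. It can also destroy one, which helps. We would handle this by choosing the cut so that whenever $x$ or $y$ becomes degree $2$ in $G'$, it becomes a degree-$2$ vertex whose neighbors are already forced into $S'$. Alternatively, in such cases we would replace $G'$ by $G'$ minus one more ear, which costs nothing since $(n+t)/2$ is unchanged by removing a vertex together with a degree-$2$ count. If the plain case analysis gets stuck, our fallback is a weighted version of the statement, bounding $|S|$ by $\tfrac12\sum_v w(v)$ with $w(v)=2$ for degree-$2$ vertices and $1$ otherwise, proved with the same longest-path reduction. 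This is the same inequality but easier to keep local.
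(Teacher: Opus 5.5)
\paragraph{Verdict: genuine gap.} Your framework is the right kind of argument: delete a piece cut off by a chord near a leaf of the dual tree and check $2q\le p+(t-t')$. It is essentially how the paper proves Theorem~\ref{thm:main}, which implies this statement because bad vertices have degree $2$, so $k\le t$ (and $n=3$ is trivial). But the whole content of such a proof is showing that a configuration satisfying the inequality always exists. You only assert this, and the moves you list do not suffice.

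Consider the leaf-branch of Fig.~\ref{fig:lem}(c): triangles $u_1u_2u_3$ with $\deg u_1=2$, then $u_2u_3u_4$, $u_2u_4u_5$, $u_4u_5u_6$, $u_4u_6u_7$, the last one internal. Its chords are $u_2u_3$, $u_2u_4$, $u_4u_5$, $u_4u_6$, and every one of these cuts violates your inequality.
\begin{itemize}
\item At $u_2u_3$: $p=1$ and $t'=t$ because $u_3$ becomes an ear, so you may add nothing. But only $u_2,u_4$ are forced into $S'$, and $u_1$ sees just $u_2$.
\item At $u_2u_4$ or $u_4u_5$: $p=2$ or $3$, and again $t'=t$ because $u_2$, resp.\ $u_5$, becomes an ear, so $q\le 1$. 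Yet none of $u_1,u_2,u_3$ is guaranteed to lie in $S'$, and $u_1$ needs two of them.
\item At $u_4u_6$: $p=4$ and $t-t'=1$, so $q\le 2$. Two of $u_1,u_2,u_3$ must be added, and then $u_5$, with $N[u_5]=\{u_2,u_4,u_5,u_6\}$, still needs at least one more dominator. Nothing forces $u_4$ or $u_6$ into $S'$, so $q=3$.
\end{itemize}
This is exactly where your single-ear move breaks: it only works when the ear's other neighbor happens to be forced. Peeling off the newly created ear merely moves the cut to the next chord on this list.

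So when two such branches hang on the same internal triangle, which can happen at the end of a longest path, every chord cut not already ruled out deletes at least $4+4+1=9$ vertices. That contradicts your ``at most about $4$''. What is missing is what the paper supplies:
\begin{itemize}
\item a structure lemma restricting the leaf-branches of a minimal counterexample (Lemma~\ref{lem:dist}, whose proof even needs the non-chord reduction $G-\{u_3,u_4\}+u_1u_5$);
\item a case analysis that pairs two leaves with a common nearest degree-$3$ node and removes both branches at once (up to ten vertices in Case~4).
\end{itemize}

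Your proposed repairs do not replace this.
\begin{itemize}
\item Which chord endpoint becomes forced is dictated by which edge of the $G'$-side triangle is shared with the next triangle; you do not get to choose it.
\item The strengthening for degree-$2$ vertices is automatic: any double dominating set can be assumed to contain both neighbors of every degree-$2$ vertex.
\item The version for an arbitrary outer edge is a genuinely stronger statement that would itself have to be carried through the induction. You would then face a prescribed edge inside the deleted piece, or two prescribed edges in $G'$ at once, and you do not address either.
\item The weighted form $\frac12\sum_v w(v)$ is literally $(n+t)/2$, so it changes nothing.
\end{itemize}
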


Recently, Abd~Aziz, Rad and Kamarulhaili~\cite{aziz22} improved the
bound of Theorem~\ref{thm:zhuang}.
In this paper, we point out a flaw of the proof in~\cite{aziz22} and
provide a complete proof for the upper bound for the double domination
in maximal outerplanar graphs.
Our proof is inspired by the work of Henning et
al.~\cite{henning25:_dijun}.

\section{Preliminaries}
\label{sec:preliminaries}

A maximal outerplanar graph $G$ can be embedded in the plane such that
the boundary of the outer face is a Hamiltonian cycle and each inner
face is a triangle.
A maximal outerplanar graph embedded in the plane is called a
\emph{maximal outerplane graph}.
For such an embedding of $G$, we denote by $C_{G}$ the Hamiltonian
cycle which is the boundary of the outer face.
We refer to inner faces of a maximal outerplane graph as
\emph{triangles}.
An inner face of a maximal outerplane graph $G$ is an \emph{internal
  triangle} if it is not adjacent to the outer face.
Two triangles are \emph{adjacent} if they share a common edge.
The \emph{dual tree} $T$ of a maximal outerplane graph $G$ is the
graph whose vertices correspond to the triangles of $G$, and where two
vertices of $T$ are adjacent if and only if their corresponding
triangles of $G$ are adjacent.
The dual tree $T$ has maximum degree at most 3, and a vertex of degree
3 corresponds to an internal triangle of $G$, and a vertex of degree 1
corresponds to a triangle that contains a vertex of degree 2 in $G$.

Let $v_{1}, v_{2}, \dots, v_{t}$ be all the vertices of degree 2 which
appear in the clockwise direction on $C_{G}$.
A vertex $v_{i}$ is called a \emph{bad} vertex if the clockwise path
from $v_{i}$ to $v_{i+1}$ on the outer cycle $C_{G}$ has at least 3
edges for $i = 1, 2, \dots, t$, where the subscripts are taken modulo
$t$.

In \cite{aziz22}, the following theorem was provided.
\begin{thm}[Theorem~2.1 in~\cite{aziz22}]
  Let $G$ be a maximal outerplane graph of $n \geq 4$ vertices.
  If $G$ has $k$ bad vertices, then $\gamma_{\times 2}(G) \leq
  (n+k)/2$.
\end{thm}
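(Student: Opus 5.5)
Our plan is to prove the statement by induction on $n$, following the reduction strategy of Henning et al.~\cite{henning25:_dijun}. We prove a slightly stronger statement that is easier to carry through the induction. Let $k(G)$ denote the number of bad vertices. We aim for $\gamma_{\times 2}(G) \leq (n+k(G))/2$ for all maximal outerplane graphs with $n\ge 4$, and verify the small cases $n=4,5,6$ directly.

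If $G$ has a vertex of degree $2$ on both sides with short gaps, then $k$ is small relative to $t$. Note also that $k \ge 1$ whenever two consecutive degree-2 vertices are far apart. So the only hard regime is when $k < t$, and there the bound improves on Theorem~\ref{thm:zhuang}. We organise the inductive step around a leaf of the dual tree $T$ that is farthest from a fixed root. That is, we take a deepest branch of $T$ and look at the small configuration near its end. This configuration is a fan or a small union of triangles attached to the rest of $G$ along one edge $xy$, which is a chord of $C_G$.

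The key step is a catalogue of such end configurations. For each one we do three things. We delete a set $D$ of $2m$ vertices, with $m\ge 1$, whose removal leaves a maximal outerplane graph $G'=G-D$; if needed we instead contract onto the chord $xy$. We choose $m$ (or $m+1$) vertices $S_D$ that double dominate $D$, given that $x$ or $y$ will be placed in the final set. Finally we bound how $k$ changes: we want $k(G') \le k(G) + 2(m - |S_D|) + |D| - 2m$, so that
\[
\gamma_{\times2}(G) \le \gamma_{\times2}(G') + |S_D| \le \frac{n-|D|+k(G')}{2} + |S_D| \le \frac{n+k(G)}{2}.
\]
To guarantee that $x$ or $y$ lies in the set built for $G'$, we strengthen the induction hypothesis. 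For every edge $xy$ of $C_{G}$, or every chord, we claim there is a minimum-type double dominating set of size at most $(n+k)/2$ containing a prescribed endpoint. Alternatively, we allow one extra vertex whenever the deletion creates a new bad vertex, since a new bad vertex pays $1/2$.

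The expected main obstacle is the bookkeeping of bad vertices under deletion. This is exactly where the proof in~\cite{aziz22} breaks. Removing an ear can turn a neighbouring degree-2 vertex into a bad one, or destroy one, and the interval between consecutive degree-2 vertices in $G'$ can merge two old intervals. We would handle this by case analysis on the distance along $C_G$ from the leaf's degree-2 vertex $v_i$ to $v_{i-1}$ and $v_{i+1}$: gaps of $2$ versus at least $3$. We use the fact that the deepest leaf has at most two dual-tree steps to a degree-3 node, which limits the configurations to a finite list (fans of size $2,3,4$ hanging off an internal triangle). For each configuration we choose $D$ so that the new outer cycle keeps the same short or long gap pattern on both sides. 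If this fails, we choose $D$ to include the adjacent degree-2 vertex as well, so that any new bad vertex is offset by a removed old one. The degenerate cases, where $G'$ has fewer than $4$ vertices or $T$ is a path (a fan-like strip), we would settle directly by explicit construction: take every other vertex of the outer path plus endpoints.
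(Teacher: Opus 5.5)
Your proposal has a genuine gap in both of its load-bearing steps.

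First, the ``fact'' that the deepest leaf of $T$ is within two steps of a degree-3 node, so that only fans of $2,3,4$ triangles need to be catalogued, is false. In a general maximal outerplane graph a pendant path of $T$ can be arbitrarily long, and the corresponding strip of triangles need not be a fan. Bounding this length is the main structural work of the paper. Lemma~\ref{lem:dist} shows that, for a minimum counterexample, the distance from a leaf to its nearest degree-3 node lies in $\{1,2,4,6\}$. Distances $3$, $5$ and $\ge 7$, and the case that $T$ is a path, are removed by explicit reductions. Distances $4$ and $6$ are forced into the specific non-fan shapes of Fig.~\ref{fig:lem}(c),(d). Those two shapes are \emph{not} eliminated on their own. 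The paper disposes of them only by taking two leaves $s_1,t_1$ with a common nearest degree-3 node $t$ and handling every pair $(d_s,d_t)$ jointly (Cases 1--4). A catalogue of single fan-shaped ends is therefore unjustified, and it misses exactly the configurations that survive the reductions.

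Second, the extension step $\gamma_{\times2}(G)\le\gamma_{\times2}(G')+|S_D|$ needs the attachment vertices to lie in the set chosen for $G'$, and your mechanism for this fails. Your strengthened hypothesis asks for a double dominating set of size at most $(n+k)/2$ containing a prescribed endpoint of any outer edge or chord. This is false. In $K_4-e$ we have $n=4$ and $k=0$, and the unique double dominating set of size $2$ is the pair of degree-3 vertices (recall every vertex, including those in $S$, must be dominated twice). So no such set contains a prescribed degree-2 endpoint of an outer edge. Even restricted to chords, you would have to re-prove the stronger statement for $G$ in every reduction, which the proposal does not address.

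Your fallback that ``a new bad vertex pays $1/2$'' runs the wrong way. A new bad vertex in $G'$ \emph{raises} the bound $(n'+k')/2$, and an extra vertex costs $1$, not $1/2$.

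The paper's device needs no strengthening. It chooses $G'$ so that the required attachment vertices are the two neighbours of a degree-2 vertex of $G'$; sometimes this even means adding an edge, as in $G-\{u_3,u_4\}+u_1u_5$. Those two neighbours lie in a minimum double dominating set of $G'$ without loss of generality. The paper pays for deleting an odd number of vertices by making a bad vertex disappear, typically the leaf ear $u_1$, so that $k'\le k-1$.
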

This theorem was proved by mathematical induction and case-by-case
analysis.
Let $G_{1} = G - \{v_{1},v_{2},\dots,v_{t}\}$.
Then $G_{1}$ is a maximal outerplane graph.
Let $u$ be a vertex of degree 2 in $G_{1}$.
Since the degree of $u$ is not 2 in $G$, it is adjacent to some vertex
$v \in \{v_{1},v_{2},\dots,v_{t}\}$.
Let $v$ be adjacent to $u$ and $u_{1}$, and let $u_{0}$ be the vertex
just before $u_{1}$ and $u_{2}$ be the vertex just after $u$.
The proof in~\cite{aziz22} overlooks the case represented by
Fig.~\ref{fig:lack}, that is, $\deg_{G} u_{1} = 4$ and $u_{0}u_{2} \in
E(G)$.
In Section~2 of~\cite{aziz22}, only the cases where
$\deg_{G} u_{1} = 3$ and $\deg_{G} u_{1} \geq 5$ were considered.
Hence, the proof in~\cite{aziz22} is incomplete.

\begin{figure}[tbp]
  \centering
  \includegraphics{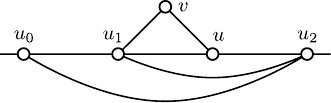}
  \caption{The situation that was missing from the proof.}
  \label{fig:lack}
\end{figure}

\section{Main Results}
\label{sec:results}

In this section, we give a proof of the following theorem.

\begin{thm}
  \label{thm:main}
  For any maximal outerplane graph $G$ with $n \geq 4$ vertices and
  $k$ bad vertices,
  \begin{displaymath}
    \gamma_{\times 2}(G) \leq \frac{n + k}{2}.
  \end{displaymath}
\end{thm}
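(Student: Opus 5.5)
We prove Theorem~\ref{thm:main} by induction on $n$, following the reduction strategy of Henning et al.~\cite{henning25:_dijun}: remove a small ``end'' piece of $G$ near a leaf of the dual tree, apply induction to the remaining maximal outerplane graph $G'$, and extend a minimum double dominating set of $G'$. Writing $k(G)$ for the number of bad vertices, the goal in each reduction is
\[
\gamma_{\times 2}(G) \le \gamma_{\times 2}(G') + \frac{(n-n')+(k-k')}{2}.
\]
Typically we delete $2$ vertices and add at most one vertex to the dominating set, or delete $p$ vertices and add at most $p/2$ vertices, adjusted by the change in bad vertices.

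The base cases are small graphs, roughly $4\le n\le 7$, checked directly. Two facts are used repeatedly. First, any two adjacent vertices that together dominate the graph form a double dominating set. Second, every degree-$2$ vertex must itself be in $S$ or have both of its neighbors in $S$. For example, a fan on $n$ vertices has $k=0$ or $k$ small, and its center together with every other rim vertex works.

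For the inductive step, let $P$ be a longest path in the dual tree $T$ and consider the triangles at its end. Let $v$ be a degree-$2$ vertex of $G$ in the leaf triangle $vuu_1$. We split into cases according to the structure near $v$, namely the degrees of $u$ and $u_1$ and whether the dual-tree branch at the nearest triangle of degree $3$ has length $1$ or $2$. This is exactly the local configuration studied in Section~2, now including the case $\deg_G u_1=4$ with $u_0u_2\in E(G)$.

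In each case we choose $G'$ as follows:
\begin{itemize}
\item If $v$ is not bad-relevant, delete $v$ together with one more vertex of a short ear, so that $G'$ is still maximal outerplanar and has $n'\ge 4$ vertices.
\item Otherwise, delete a fan-shaped ear consisting of the vertices strictly between two consecutive outer vertices that remain adjacent.
\end{itemize}
The added vertices are chosen among the hub vertices, such as $u$, $u_1$ and their common neighbors. These hubs dominate the deleted part twice and also repair the domination of any vertex whose neighborhood shrank.

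The key bookkeeping is how $k$ changes. Deleting vertices can create new degree-$2$ vertices in $G'$, for instance $u$ in the missed case, and may create new bad vertices in $G'$, increasing $k'$. This is favorable, since the bound for $G'$ becomes weaker only in a way that costs us nothing in $G$, but it must be checked. It can also destroy bad vertices, which decreases $k'$ and makes the budget tighter. We therefore pick the deletion so that every bad vertex of $G'$ is either a bad vertex of $G$ or is paid for by a deleted bad vertex. Where a degree-$2$ vertex of $G'$ becomes bad only in $G'$, we place it in $S$, using the slack this provides.

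The main obstacle is the configuration of Fig.~\ref{fig:lack}, with $\deg_G u_1=4$ and $u_0u_2\in E(G)$. Here deleting only $v$ makes $u$ a degree-$2$ vertex of $G'$ whose double domination forces both $u_0$-side and $u_2$-side choices, and the naive count loses $1/2$.

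For this case we would instead delete the larger set $\{v,u,u_1\}$, possibly together with the neighboring ear. We then add $u_0$ and $u_2$ (adjacent vertices) to a $\gamma_{\times 2}(G')$-set, using that the bad-vertex status of the segment from $v$ onward supplies the extra $+1/2$. If that fails, we fall back on contracting the edge $uu_1$, which keeps the graph maximal outerplanar, and lifting the dominating set, checking by subcases whether the contracted vertex lies in $S'$.
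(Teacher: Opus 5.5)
There is a genuine gap. The proposal never supplies the structural step that keeps the case analysis finite, and the concrete steps it does spell out fail.

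Your case split, by whether ``the dual-tree branch at the nearest triangle of degree 3 has length 1 or 2'', is not exhaustive. The dual tree $T$ may be a path (a fan, a zigzag strip), so there may be no degree-3 triangle at all. Taking a longest path in $T$ also does not bound the distance from its end leaf to the nearest degree-3 vertex: if $T$ is a spider with three long legs, every leaf is far from the unique degree-3 vertex. Long strips of triangles are exactly where the difficulty lies. The paper handles them with Lemma~\ref{lem:dist}. In a minimal counterexample, $T$ is not a path, and every leaf is at distance $1$, $2$, $4$ or $6$ from its nearest degree-3 vertex, with forced zigzag shapes in the last two cases. This is proved by explicit reductions that exclude every other shape of the strip. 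Even then, the paper does not reduce one branch in isolation. It takes two leaves $s_1,t_1$ with a common nearest degree-3 vertex $t$ and reduces both branches at once, so that a degree-2 vertex created on one side forces the dominator missing on the other side into $S'$. That mechanism rests on a strengthened hypothesis you never state: the double dominating set can be chosen to contain no degree-2 vertex, so every vertex of degree 2 in $G'$ has both neighbors in $S'$. Your generic ``delete an ear, add hub vertices'' never checks that the added hubs double dominate the deleted vertices within the budget, and that check is the entire content of the proof.

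The specific ingredients you do give are also wrong.
\begin{itemize}
\item By your own inequality $\gamma_{\times 2}(G) \le \gamma_{\times 2}(G') + ((n-n')+(k-k'))/2$, new bad vertices in $G'$ (larger $k'$) shrink the allowance and are harmful, while destroyed bad vertices help. You state the reverse, so your bookkeeping rule points the wrong way.
\item Your remedy for the configuration of Fig.~\ref{fig:lack} fails. The vertex $v$ has degree $2$ with neighbors $u$ and $u_1$ only. After deleting $\{v,u,u_1\}$ and adding $u_0,u_2$, the vertex $v$ is not dominated at all.
\item The fallback of contracting the chord $uu_1$ while $v$ is present makes $v$ a pendant vertex, so the result is not maximal outerplanar.
\item Your ``first fact'' is false. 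Two adjacent vertices that together dominate $G$ double dominate it only if every other vertex is adjacent to \emph{both} of them. A fan with center $c$ and the set $\{c,r_1\}$, where $r_1$ is an end of the rim, is a counterexample.
\end{itemize}
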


Throughout this section, we suppose that a double dominating set $S$
does not contain vertices of degree 2.
That is, a maximal outerplane graph $G$ has a double dominating set
$S$ such that $S$ does not contain vertices of degree 2 and $|S| \leq
(n+k)/2$.
Note that, if $u$ is a vertex of degree 2, the two vertices adjacent
to $u$ are contained in $S$.
We can easily show that Theorem~\ref{thm:main} is true when $n$ is small.

\begin{lem}
  \label{lem:small}
  If $4 \leq n \leq 8$, then $\gamma_{\times 2}(G) \leq (n+k)/2$ holds.
\end{lem}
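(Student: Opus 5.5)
We will prove Lemma~\ref{lem:small} by exhibiting, for each maximal outerplane graph $G$ with $4 \le n \le 8$, a double dominating set $S$ that avoids the vertices of degree~2 and has $|S| \le (n+k)/2$. The plan is to use $k$ as little as possible and rely mainly on the bound $\lfloor n/2 \rfloor$ or $\lceil n/2\rceil$, splitting into cases by $n$ and by the shape of the dual tree $T$. Since $T$ has $n-2 \le 6$ vertices and maximum degree at most~3, there are very few possibilities. $T$ is a path (so $t=2$), a tree with one degree-3 vertex ($t=3$, $n \ge 6$), or, only when $n=8$, a tree with two degree-3 vertices ($t=4$).

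\textbf{Small cases.} For $n=4$ and $n=5$, $G$ is a fan. Take the two vertices of largest degree, which are adjacent and together dominate everything. This gives $|S|=2 \le n/2$. For $n=6$ and $n=7$, one center $c$ has many neighbours. Take $c$ together with a small number of neighbours of the degree-2 vertices so that every vertex outside $S$ has two neighbours in $S$. We must check that $3 \le n/2$ when $n=6$ and $3 \le 3.5$ when $n=7$ in all cases. A configuration needing $4$ vertices with $n=7$ would need $k \ge 1$. For this we use the observation that when $T$ is a path with $t=2$, the two arcs between $v_1$ and $v_2$ have total length $n$. So if $n \ge 7$, at least one arc has length $\ge 3$, giving $k \ge 1$. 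In general $k \ge 1$ whenever some arc between consecutive degree-2 vertices has length $\ge 3$. Summing the arc lengths to $n$ shows $k \ge 1$ as soon as $n > 2t$.

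\textbf{Using $k$ at $n=8$.} Since $t \le 4$, we have $n=8>2t$ unless $t=4$. So for $t \le 3$ we have $k\ge1$ and may use $|S| \le 4.5$, i.e.\ $|S|\le 4$. For $t=4$ with all arcs of length $2$, the structure is rigid: the degree-2 vertices alternate around $C_G$, and $G-\{v_1,\dots,v_4\}$ is a $4$-cycle with a chord. The four neighbours of the $v_i$ are exactly these inner vertices, and they form a double dominating set of size $4=n/2$. In the other $n=8$ cases we must find $4$ vertices. The natural choice is the set of all neighbours of degree-2 vertices, padded or trimmed; each inner vertex has $\ge 2$ neighbours among them.

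\textbf{Expected obstacle.} The main difficulty is the exhaustive enumeration of triangulations of the $7$- and $8$-gon up to symmetry, where $S$ must be checked by hand. I would organize this by the position of the degree-3 vertex or vertices in $T$ and by the length of the path segments. In each case, start from $N(v_1)\cup\dots\cup N(v_t)$, which is forced, and add at most one vertex where that set fails to double-dominate some vertex.
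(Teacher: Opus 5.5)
Your overall plan, a finite check over triangulations of the $4$- to $8$-gon with an explicit $S$ avoiding degree-2 vertices, is what the paper means when it calls the small cases easy (it gives no details). However, several of the size targets you commit to are false. The cause is that the only information you extract about $k$ is $k\ge 1$ when $n>2t$.

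(i) For $n=5$, take outer cycle $x_1,\dots,x_5$ with chords $x_1x_3,x_1x_4$. Then $N[x_2]\cap N[x_5]=\{x_1\}$, so every double dominating set has at least $3$ vertices. Your two adjacent high-degree vertices dominate everything, but $x_5$ sees only one of them, so they do not double dominate.

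(ii) For $n=6$, take chords $x_1x_3,x_1x_4,x_4x_6$. The degree-2 vertices $x_2,x_5$ have disjoint closed neighbourhoods, so $\gamma_{\times 2}(G)=4>3$. The bound $k\ge 1$ only gives $7/2$.

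(iii) For $n=8$, $t=3$, take chords $x_1x_3,x_3x_5,x_1x_5,x_5x_8,x_6x_8$. The degree-2 vertices are $x_2,x_4,x_7$. $N[x_7]$ is disjoint from $N[x_2]\cup N[x_4]$, and $N[x_2]\cap N[x_4]=\{x_3\}$, so $\gamma_{\times 2}(G)\ge 2+3=5$. Your target $|S|\le 4$ for all $n=8$, $t\le 3$ is therefore unattainable.

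The lemma survives in these examples only because $k$ is exactly $1$, $2$, $2$ respectively, and your argument never establishes this. As written, the case check would get stuck, not merely be tedious. Your $n=7$ and $t=4$ cases are fine.

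The missing idea is to count the forced set exactly instead of bounding $k$ from below.
\begin{itemize}
\item A neighbour of a degree-2 vertex is its predecessor or successor on $C_G$. Consecutive degree-2 vertices $v_i,v_{i+1}$ share a neighbour exactly when the arc between them has length $2$, i.e.\ when $v_i$ is good. Hence $|N(v_1)\cup\dots\cup N(v_t)|=2t-(t-k)=t+k$.
\item Good arcs have length $2$ and bad arcs length at least $3$, so $n\ge 2t+k$. Thus $t+k\le (n+k)/2$, and you may still add $\lfloor (n-2t-k)/2\rfloor$ vertices.
\item The forced set double dominates every degree-2 vertex and each of its own members, since the two neighbours of a degree-2 vertex are adjacent.
\item So only the $n-2t-k$ interior vertices of bad arcs not adjacent to either end of the arc remain.
\end{itemize}
For $n\le 8$ these remaining vertices are easy to handle:
\begin{itemize}
\item An arc has length at most $6$, and at most one arc has length $5$ or $6$.
\item The middle vertex of an arc of length $4$ is already adjacent to two forced vertices.
\item An arc $v,a_1,\dots,a_4,v'$ of length $5$ needs one added vertex, $a_2$.
\item An arc of length $6$ occurs only in the $8$-vertex fan. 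There, adding the middle vertex of the arc works, since it is adjacent to the forced centre.
\end{itemize}
In each case the allowance $\lfloor (n-2t-k)/2\rfloor$ is at least the number of added vertices, so every case closes within $(n+k)/2$.
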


Suppose to the contrary that there exists a counterexample to
Theorem~\ref{thm:main}.
Let $G$ be a counterexample with the minimum number $n$ of vertices,
and let $G$ have $k$ bad vertices.
Since $G$ is a counterexample of minimum number of vertices, it
satisfies $\gamma_{\times 2}(G) > (n+k)/2$.
Furthermore, if $G'$ is a maximal outerplane graph with $n'$ vertices
for $4 \leq n' < n$ and with $k'$ bad vertices, then
$\gamma_{\times 2}(G) \leq (n'+k')/2$ holds.

\begin{lem}
  \label{lem:dist}
  Let $T$ be the dual tree of $G$ and $t_{1}$ be a leaf of $T$.
  Then the following properties hold.
  \begin{itemize}
  \item[(1)] $T$ is not a path, that is, $T$ has a vertex of degree 3.

  \item[(2)] If $t$ is the nearest vertex of degree 3 from $t_{1}$ in
    $T$, then $\dist_{T}(t_{1}, t) \in \{1, 2, 4, 6\}$.

  \item[(3)] If $t$ is the nearest vertex of degree 3 from $t_{1}$ in
    $T$ and $\dist_{T}(t_{1}, t)=4$, then the subgraph of $G$ induced
    by the vertices in the triangles corresponding to the vertices on
    the path between $t$ and $t_{1}$ in $T$ is isomorphic to the graph
    in Fig.~\ref{fig:lem}(c).

  \item[(4)] If $t$ is the nearest vertex of degree 3 from $t_{1}$ in
    $T$ and $\dist_{T}(t_{1}, t)=6$, then the subgraph of $G$ induced
    by the vertices in the triangles corresponding to the vertices on
    the path between $t$ and $t_{1}$ in $T$ is isomorphic to the graph
    in Fig.~\ref{fig:lem}(d).
  \end{itemize}
\end{lem}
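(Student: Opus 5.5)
Throughout, $G$ is the minimum counterexample, and by Lemma~\ref{lem:small} we have $n \geq 9$. The approach is to derive each item from the minimality of $G$. Whenever the part of $G$ near a leaf of $T$ has a forbidden shape, we delete a few vertices near that leaf to obtain a smaller maximal outerplane graph $G'$ with $n' \geq 4$ vertices and $k'$ bad vertices. We then take a double dominating set $S'$ of $G'$ with $|S'| \leq (n'+k')/2$ that avoids degree-2 vertices, and add a suitable set of $a$ new vertices. This gives a double dominating set of $G$ of size at most $(n+k)/2$, a contradiction. Each reduction needs the bookkeeping inequality $2a \leq (n-n') + (k-k')$. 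So we must control how the set of bad vertices changes: deleting a fan at a leaf may create a new degree-2 vertex and change the clockwise gaps between consecutive degree-2 vertices, which can move $k$ by at most about one.

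For (1), suppose $T$ is a path, so $G$ is a ``strip'' triangulation with exactly two degree-2 vertices $v_1,v_2$. Since $n \geq 9$, both clockwise paths between $v_1$ and $v_2$ have at least 3 edges unless the strip is very lopsided, giving $k=2$ or $k \geq 1$. We then exhibit a double dominating set of size at most $(n+k)/2$ directly. Take the two neighbours of each $v_i$, and along the path of triangles alternately select vertices on the two sides of the strip. Each triangle then contains an edge of $S$, so every vertex outside $S$ sees two vertices of $S$. Alternatively, we can do an induction: remove two consecutive fan vertices at one end and add back one or two vertices.

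For (2)--(4), let $t_1=\tau_1,\tau_2,\dots,\tau_d=t$ be the path in $T$, so every $\tau_j$ with $j<d$ has degree 2 in $T$. The triangles $\tau_1,\dots,\tau_{d-1}$ then form a strip $H$ hanging off one edge $xy$ of the triangle $t$. We argue by the length $d-1$ of the strip and its ``zigzag pattern'', that is, whether consecutive triangles turn left or right. If the strip contains a fan of three consecutive triangles sharing a vertex, or if $d$ is $3$, $5$, or at least $7$, we cut off an appropriate initial segment of the strip. This segment consists of the first $2$ or $3$ non-shared vertices, starting at the degree-2 vertex $v$ of $\tau_1$. The remaining graph is still maximal outerplane with $n' \geq 4$ because $t$ survives. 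We add $1$ vertex per $2$ deleted, or $2$ per $3$ deleted when the deletion destroys a bad vertex. We check that each newly degree-2 vertex of $G'$ has its neighbours covered, and that the deleted vertices are doubly dominated by $S'$ plus the added vertices. The added vertices will typically be the neighbours of $v$ and a vertex on the shared edge. The surviving configurations are exactly $d \in \{1,2,4,6\}$. For $d=4$ and $d=6$, the same exclusions of fans force the unique zigzag shapes of Fig.~\ref{fig:lem}(c),(d).

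The main obstacle is the accounting of $k'$ versus $k$ in each reduction. Deleting vertices near $v$ changes the gap from the previous degree-2 vertex to $v$ and from $v$ to the next one, and may create a new degree-2 vertex on $xy$. To handle this, we compare gaps on $C_G$ and $C_{G'}$ case by case. In the bad case $k' = k+1$, we delete one extra vertex, which is why several of the reductions remove $3$ or $5$ vertices rather than $2$.
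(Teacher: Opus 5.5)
Your overall strategy is the paper's: take a minimal counterexample, walk from the leaf $t_1$, and apply local deletions subject to $2a \le (n-n')+(k-k')$. However, you never carry out the case analysis, and the rules you state to guide it are wrong, so the proposal does not prove the lemma.

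For (1), asking that every triangle contain an edge of $S$ is the same as asking that $V(G)\setminus S$ be independent, i.e.\ that $S$ be a vertex cover. This is because every edge of a maximal outerplane graph lies in a triangle. Take the alternating strip on vertices $1,\dots,9$ with edges $\{i,i+1\}$ and $\{i,i+2\}$; here $n=9$ and $k=2$. Every vertex cover has at least $9-3=6>11/2$ vertices, even though $\{2,3,7,8\}$ is a double dominating set. So your construction cannot give the bound when $T$ is a path. Your fallback of removing two vertices and adding back two also violates your own accounting. The paper instead obtains (1) from the same walk: Claim~\ref{clm:6} forces $\deg_T t_7=3$, and the single $9$-vertex path exception is checked directly.

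For (2)--(4), your exclusion rule is vacuous. Any three consecutive triangles of a strip share a vertex, because the two shared edges of the middle triangle meet. Excluding such fans would therefore exclude every $d\ge 3$, including $d=4$ and $d=6$.

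The surviving shapes are also not alternating zigzags. In Fig.~\ref{fig:lem}(c),(d) the shared edges are $u_2u_3,u_2u_4,u_4u_5,u_4u_6$, followed by $u_6u_7,u_6u_8$ in (d). So $u_4$ lies in the four consecutive triangles $F_2,\dots,F_5$. The alternating continuation $F_5=\{u_5,u_6,u_7\}$ is exactly the case Claim~\ref{clm:4} must rule out.

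Finally, cutting only the first $2$ or $3$ vertices does not always work. Suppose $F_1,\dots,F_6$ are as in Fig.~\ref{fig:lem}(d) and are followed by $F_7=\{u_7,u_8,u_9\}$.
\begin{itemize}
\item Deleting $\{u_1,u_3\}$ leaves $u_2$ of degree $2$, so $u_2\notin S'$. Two vertices must then be added to double dominate $u_1$.
\item Deleting $\{u_1,u_2,u_3\}$ forces adding two of $u_1,u_2,u_3$. But $u_5$ becomes a new bad degree-$2$ vertex in either orientation, so $k'=k$, and you only reach $(n+k+1)/2$.
\end{itemize}
The paper handles this case by deleting $u_1,\dots,u_6$ and adding $u_2,u_3,u_6$; Claim~\ref{clm:5} similarly needs five deletions. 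Determining exactly which configurations admit a valid reduction, and showing the rest are precisely (c) and (d), is the whole content of the lemma, and that is what your proposal is missing.
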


\begin{figure}[tbp]
  \centering
  \includegraphics[width=\textwidth]{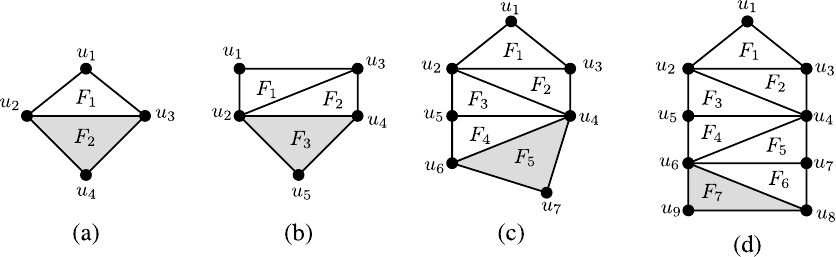}
  \caption{Lemma~\ref{lem:dist}.  Shaded triangles are internal
    triangles.}
  \label{fig:lem}
\end{figure}

\begin{proof}
  Let $t_{1}$ be a leaf of $T$ and $F_{1}$ be the triangle
  corresponding to $t_{1}$.  Let $V(F_{1}) = \{u_{1}, u_{2}, u_{3}\}$
  and $\deg_{G} u_{1} = 2$.  Let $t_{2}$ be the vertex adjacent to
  $t_{1}$ in $T$, and let $F_{2}$ be the triangle corresponding to
  $t_{2}$.
  Let $V(F_{2}) = \{u_{2}, u_{3}, u_{4}\}$.
  If $\deg_{T} t_{2} = 3$, then
  we have $t=t_{2}$ and $\dist_{T}(t_{1}, t) = 1$ and hence
  the property~(2) holds (Fig.~\ref{fig:lem}(a)).  Since $G$
  has at least 9 vertices, we have $\deg_{T} t_{2} \neq 1$.  Hence we
  assume that $\deg_{T} t_{2} = 2$.

  Let $t_{3} (\neq t_{1})$ be the vertex adjacent to $t_{2}$, and let
  $F_{3}$ be the triangle corresponding to $t_{3}$.  If
  $\deg_{T} t_{3} = 3$, then we have $t=t_{3}$ and
  $\dist_{T}(t_{1}, t) = 2$ and hence the property~(2) holds
  (Fig.~\ref{fig:lem}(b)).  Since $G$ has at least 9 vertices, we have
  $\deg_{T} t_{2} \neq 1$.  Let $u_{5}$ be the vertex in $F_{3}$ that
  is not in $F_{2}$.  We may assume that
  $V(F_{3}) = \{u_{2}, u_{4}, u_{5}\}$.

  We assume that $\deg_{T} t_{3} = 2$.  Let $t_{4} (\neq t_{2})$ be
  the vertex adjacent to $t_{3}$, and let $F_{4}$ be the triangle
  corresponding to $t_{4}$.  Let $u_{6}$ be the vertex in $F_{4}$ that
  is not in $F_{3}$.  Since $G$ has at least 9 vertices, we have
  $\deg_{T} t_{4} \neq 1$.

  \begin{figure}[tbp]
    \centering
    \includegraphics{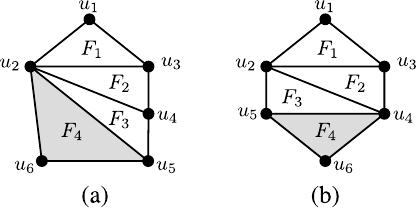}
    \caption{Claim~\ref{clm:2}.  Possible triangles adjacent to
      $F_{3}$.  Shaded triangles are internal triangles.}
    \label{fig:clm2}
  \end{figure}

  \begin{claim}
    \label{clm:2}
    $\deg_{T} t_{4} = 2$.
  \end{claim}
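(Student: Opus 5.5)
We argue by contradiction: assume $\deg_{T} t_{4} = 3$, that is, $F_{4}$ is an internal triangle. We then build a smaller maximal outerplane graph $G'$ and use the minimality of $G$ to obtain a double dominating set of $G$ of size at most $(n+k)/2$.

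\textbf{The configuration.} We have $V(F_{4}) = \{u_{i}, u_{j}, u_{6}\}$, where $u_{i}u_{j}$ is the edge shared with $F_{3}$; this edge is either $u_{2}u_{5}$ or $u_{4}u_{5}$. Up to reflection the two cases are symmetric, as suggested by Fig.~\ref{fig:clm2}, so we treat one of them. The vertices of $F_{1},F_{2},F_{3}$ other than the shared edge form a fan. On $C_{G}$ the boundary walk runs through $u_{1}$ and two of $u_{3},u_{4},u_{2},u_{5}$, say $u_{3}, u_{1}, u_{2}$ with $u_{4}$ and $u_{5}$ further along.

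\textbf{The reduction.} Since $F_{4}$ is internal, removing the branch at one of its edges disconnects the tree. We delete the vertices $D = \{u_{1}, u_{3}, u_{4}\}$ that lie only in $F_{1}\cup F_{2}\cup F_{3}$ (the exact set depends on the case) and keep the shared edge on the outer face. Then $G' = G - D$ is maximal outerplane with $n' = n-3$ vertices, and $n' \ge 4$ because $n \ge 9$ by Lemma~\ref{lem:small}.

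The critical check is the change in bad vertices. The dual of $G'$ is $T$ minus $t_{1},t_{2},t_{3}$, so $t_{4}$ now has degree $2$, and no new degree-2 vertex is created, because $F_{4}$ is not a leaf of the new dual tree. The degree-2 vertices of $G'$ are therefore those of $G$ minus $u_{1}$. Removing $u_{1}$ merges the clockwise segments before and after it. The clockwise path from $u_{1}$ to the next degree-2 vertex passes through at least $u_{2}$ and $u_{5}$, which are distinct and far apart, so $u_{1}$ is bad. We aim to show $k' \le k$, and this will be the delicate counting step. Minimality then gives a double dominating set $S'$ of $G'$, avoiding degree-2 vertices, with $|S'| \le (n-3+k')/2$.

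\textbf{Extending $S'$.} We add at most two vertices of $D\cup\{u_{2},u_{5}\}$ to $S'$, chosen so that $u_{1}$'s two neighbours $u_{2},u_{3}$ both lie in $S$. This double dominates $u_{1},u_{3},u_{4}$ and respects the convention that $S$ contains no degree-2 vertex. The resulting $S$ satisfies $|S| \le (n-3+k')/2 + 2$. This only gives the bound $(n+k)/2$ if $k' \le k-1$, or if a single added vertex suffices. So in the main case we must either show $k' = k-1$ or save one vertex.

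\textbf{The main obstacle.} This is the bookkeeping of bad vertices under the merge, since $k'$ can equal $k$. If $k' = k$, we instead remove $D\cup\{u_{2}\}$ (four vertices), or choose the deletion so that $u_{5}$, which has degree at least $3$ in $G$, becomes forced into $S'$ because it is adjacent to a degree-2 vertex of $G'$. Then adding only $u_{3}$ suffices, since $u_{2}$ or $u_{5}$ is already present. We expect this split, according to whether the vertex preceding $u_{1}$'s segment is bad, to close the contradiction and prove $\deg_{T} t_{4} = 2$.
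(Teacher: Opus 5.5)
There is a genuine gap. The decisive step is left at ``we expect'', and the repairs you sketch do not work.

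First, the two choices for the edge that $F_3$ shares with $F_4$ are not mirror images. The reflection was already used to fix $F_3=\{u_2,u_4,u_5\}$. After that, $F_4=\{u_2,u_5,u_6\}$ gives a fan at $u_2$ with $u_4u_5$ on $C_G$, while $F_4=\{u_4,u_5,u_6\}$ puts $u_2u_5$ on $C_G$. Your description mixes the two: $D=\{u_1,u_3,u_4\}$ is the set of vertices private to $F_1\cup F_2\cup F_3$ only in the fan configuration, whereas the boundary walk $u_1,u_2,u_5$ occurs only in the other.

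Second, in the fan configuration the subcase $k'=k$ really happens. It occurs exactly when the outer neighbour $x\ne u_1$ of $u_2$ has degree $2$, and neither of your fixes handles it. Deleting $u_2$ as well is not allowed: $u_2u_6$ is a chord (because $F_4$ is internal), so $G-\{u_1,u_2,u_3,u_4\}$ has $u_6$ as a cut vertex and is not maximal outerplane. Forcing $u_5$ into $S'$ does not help either, since $N_G(u_1)=\{u_2,u_3\}$, so adding only $u_3$ requires $u_2\in S'$ specifically.

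What is missing is a separate argument for each configuration.

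For $F_4=\{u_4,u_5,u_6\}$, take $D=\{u_1,u_2,u_3\}$, as the paper does in its Case~2. The boundary gaps on both sides of $u_1$ are long: one contains $u_3,u_4$ and the other $u_2,u_5$, none of degree $2$. Internality of $F_4$ keeps $u_4,u_5$ of degree at least $3$ in $G'$. So merging the two gaps destroys a bad vertex and $k'=k-1$ outright, with no further split. Then add $u_2,u_3$.

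For the fan, the paper deletes only two vertices: $G'=G-\{u_3,u_4\}+u_1u_5$. Equivalently one can take $G-\{u_1,u_3\}$, in which $u_4$ has degree $2$ with neighbours $u_2,u_5$. Either way $u_2,u_5\in S'$ is forced, gaps only shrink so $k'\le k$, and adding $u_3$ alone suffices.

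Your three-vertex deletion can also be completed in the fan case, but the split must be on $\deg_G x$, not on $u_5$:
\begin{itemize}
\item If $\deg_G x\ge 3$, both gaps at $u_1$ are long, so $k'=k-1$ and you add $u_2,u_3$.
\item If $\deg_G x=2$, then $x$ is still a degree-$2$ vertex of $G'$ adjacent to $u_2$. Hence $u_2\in S'$, and adding $u_3$ gives $|S|\le (n+k-1)/2$.
\end{itemize}
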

  \begin{proof}[Proof of Claim~\ref{clm:2}]
    Suppose to the contrary that $\deg_{T} t_{4} = 3$.
    Note that $u_{1}$ is a bad vertex.
    We consider two cases.

    Case~1: $V(F_{4})=\{u_{2}, u_{5}, u_{6}\}$ (See
    Fig.~\ref{fig:clm2}(a)).
    In this case, let $G' = G - \{u_{3}, u_{4}\} + u_{1}u_{5}$.
    Then, $G'$ has $n' = n-2$ vertices.
    Let $v$ be the vertex just after $u_{5}$ in the clockwise order on
    $C$.
    Note that $G'$ has $k' = k-1$ bad vertices if $\deg_{G} v = 2$,
    and $k' = k$ bad vertices if $\deg_{G} v > 2$.

    By the minimality of $G$, a minimum double dominating set $S'$
    of $G'$ satisfies $|S'| \leq (n'+k')/2 \leq (n+k-2)/2$,
    and $S'$ contains $u_{2}$ and $u_{5}$ since $\deg_{G'} v = 2$.
    The set $S = S' \cup \{u_{3}\}$ is a double dominating set of $G$
    and satisfies $\gamma_{\times 2}(G) \leq |S| \leq |S'| + 1 =
    (n+k)/2$, a contradiction.

    \vspace{1em}
    Case~2: $V(F_{4}) = \{u_{4}, u_{5}, u_{6}\}$ (See Fig.~\ref{fig:clm2}(b)).
    Let $G' = G - \{u_{1}, u_{2}, u_{3}\}$.
    $G'$ has $n' = n-3$ vertices and $k'=k-1$ bad vertices.
    By the minimality of $G$, a minimum double dominating set $S'$
    of $G'$ satisfies $|S'| \leq (n'+k')/2 = (n+k-4)/2$.
    The set $S = S' \cup \{u_{2}, u_{3}\}$ is a double dominating set
    of $G$ and satisfies
    $\gamma_{\times 2}(G) \leq |S| = |S'| + 2 = (n+k)/2$, a contradiction.

    Hence we have $\deg_{T} t_{4} \neq 3$, which implies that
    $\deg_{T} t_{4} = 2$.
  \end{proof}

  By Claim~\ref{clm:2}, we obtain $\dist_{T}(t, t_{1}) \neq 3$.
  \begin{claim}
    \label{clm:3}
    $V(F_{4}) = \{u_{4}, u_{5}, u_{6}\}$.
  \end{claim}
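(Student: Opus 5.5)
The plan is to argue by contradiction and reuse the reduction from Case~1 in the proof of Claim~\ref{clm:2}; that argument never used $\deg_{T} t_{4} = 3$. Since $F_{4}$ is adjacent to $F_{3}$ through an edge other than $u_{2}u_{4}$, either $V(F_{4}) = \{u_{4}, u_{5}, u_{6}\}$ or $V(F_{4}) = \{u_{2}, u_{5}, u_{6}\}$. So suppose $V(F_{4}) = \{u_{2}, u_{5}, u_{6}\}$; we already know $\deg_{T} t_{4} = 2$ by Claim~\ref{clm:2}.

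\emph{Local structure.} Because $t_{2}$ and $t_{3}$ have degree 2 in $T$, the edges $u_{3}u_{4}$ and $u_{4}u_{5}$ lie on $C_{G}$. Hence $N_{G}(u_{3}) = \{u_{1}, u_{2}, u_{4}\}$ and $N_{G}(u_{4}) = \{u_{2}, u_{3}, u_{5}\}$. The vertex $u_{2}$ is adjacent to $u_{1}, u_{3}, u_{4}, u_{5}, u_{6}$, so $\deg_{G} u_{2} \geq 5$. The leaf $u_{1}$ is a bad vertex, since the clockwise path $u_{1}u_{3}u_{4}u_{5}\dots$ has at least 3 edges before the next degree-2 vertex.

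\emph{Reduction.} Let $G' = G - \{u_{3}, u_{4}\} + u_{1}u_{5}$. This is a maximal outerplane graph with $n' = n-2 \geq 7$ vertices, in which $u_{1}u_{2}u_{5}$ replaces the triangles $F_{1}, F_{2}, F_{3}$.

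We check that $k' \leq k$. Degrees change only at $u_{2}$ and $u_{5}$. The vertex $u_{2}$ loses two neighbours, so $\deg_{G'} u_{2} \geq 3$. The vertex $u_{5}$ gains $u_{1}$ and loses $u_{4}$, so its degree is unchanged. Thus the set of degree-2 vertices is the same in $G$ and $G'$. The only outer path that changes is the clockwise one starting at $u_{1}$, which becomes 2 edges shorter. Hence $u_{1}$ may stop being bad but no new bad vertex appears, and $k' \leq k$.

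\emph{Lifting the dominating set.} By minimality, $G'$ has a double dominating set $S'$ with no degree-2 vertices and $|S'| \leq (n'+k')/2 \leq (n+k-2)/2$. Since $u_{1}$ has degree 2 in $G'$, both $u_{2}$ and $u_{5}$ lie in $S'$. Put $S = S' \cup \{u_{3}\}$ and check double domination in $G$:
\begin{itemize}
\item[(a)] $u_{1}$ is dominated by $u_{2}$ and $u_{3}$.
\item[(b)] $u_{4}$ is dominated by $u_{2}$, $u_{3}$ and $u_{5}$.
\item[(c)] $u_{3}$ belongs to $S$.
\item[(d)] Any other vertex has the same closed neighbourhood in $G$ as in $G'$, except $u_{2}$ and $u_{5}$, which themselves belong to $S$.
\end{itemize}
Therefore $\gamma_{\times 2}(G) \leq |S'| + 1 \leq (n+k)/2$, contradicting that $G$ is a counterexample. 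This gives $V(F_{4}) = \{u_{4}, u_{5}, u_{6}\}$.

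The step needing the most care is the bad-vertex count $k' \leq k$. One must confirm that deleting $u_{3}, u_{4}$ creates no new degree-2 vertex, which is where $\deg_{G} u_{2} \geq 5$ is used. One must also confirm that shortening the outer path can only decrease the number of bad vertices.
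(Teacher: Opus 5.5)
Your proof is correct and follows the paper's approach: the paper handles the case $V(F_{4})=\{u_{2},u_{5},u_{6}\}$ by invoking the reduction $G' = G-\{u_{3},u_{4}\}+u_{1}u_{5}$ from Case~1 of Claim~\ref{clm:2} and extending $S'$ by $u_{3}$. You additionally write out the details the paper leaves implicit, namely $\deg_{G'}u_{2}\ge 3$ (so no new degree-2 vertex appears), $k'\le k$, and the domination check.
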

  \begin{proof}[Proof of Claim~\ref{clm:3}]
    If $V(F_{4}) = \{u_{2}, u_{5}, u_{6}\}$, then we obtain a
    contradiction similarly to Case~1 in the proof of
    Claim~\ref{clm:2}.
  \end{proof}

  Let $t_{5} (\neq t_{3})$ be the vertex adjacent to $t_{4}$, and let
  $F_{5}$ be the triangle corresponding to $t_{5}$.  Let $u_{7}$ be
  the vertex in $F_{5}$ that is not in $F_{4}$.  Since $G$ has at
  least 9 vertices, we have $\deg_{T} t_{5} \neq 1$.
  If $\deg_{T} t_{5} = 3$, then we have $t=t_{5}$ and
  $\dist_{T}(t_{1}, t) = 4$ and hence the property~(2) holds.

  \begin{figure}[tbp]
    \centering
    \includegraphics{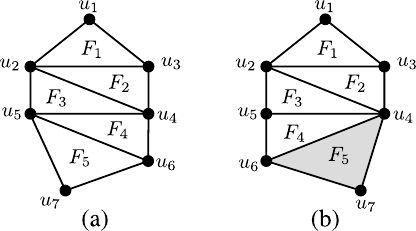}
    \caption{Claim~\ref{clm:4}.  Possible triangles adjacent to $F_{4}$
      when (a) $F_{5} = \{u_{5},u_{6},u_{7}\}$ and (b)
      $F_{5} = \{u_{4},u_{6},u_{7}\}$.}
    \label{fig:clm4}
  \end{figure}

  \begin{claim}
    \label{clm:4}
    $V(F_{5}) = \{u_{4}, u_{6}, u_{7}\}$.
  \end{claim}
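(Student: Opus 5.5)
The plan is to show that the alternative $V(F_{5}) = \{u_{5}, u_{6}, u_{7}\}$ (Fig.~\ref{fig:clm4}(a)) yields a contradiction by a reduction, in the spirit of the cases of Claim~\ref{clm:2}. First I will fix the outer-cycle structure. Since $\deg_{T} t_{3} = 2$ and $F_{4}$ contains the edge $u_{4}u_{5}$, the edge $u_{2}u_{5}$ lies on $C_{G}$. By Claim~\ref{clm:3} and the assumption on $F_{5}$, the edge $u_{5}u_{6}$ is inner, so $u_{4}u_{6}$ lies on $C_{G}$. Hence $u_{5}, u_{2}, u_{1}, u_{3}, u_{4}, u_{6}$ are consecutive on $C_{G}$. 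In this situation $\deg_{G} u_{3} = 3$ and $\deg_{G} u_{2} = \deg_{G} u_{4} = 4$; this is essentially the configuration of Fig.~\ref{fig:lack}. The vertices $u_{1}, u_{2}, u_{3}, u_{4}$ are exactly the vertices cut off by the chord $u_{5}u_{6}$.

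Next I set $G' = G - \{u_{1}, u_{2}, u_{3}, u_{4}\}$. This is a maximal outerplane graph whose outer cycle is obtained from $C_{G}$ by replacing the path $u_{5}u_{2}u_{1}u_{3}u_{4}u_{6}$ with the edge $u_{5}u_{6}$. It has $n' = n-4 \geq 5$ vertices. Given a minimum double dominating set $S'$ of $G'$ (chosen without degree-2 vertices, as agreed), I take $S = S' \cup \{u_{2}, u_{3}\}$. Then $u_{1}$ is dominated by $u_{2}$ and $u_{3}$, these two lie in $S$, and $u_{4}$ is adjacent to both. All other vertices keep their $S'$-domination, since $G'$ is an induced subgraph. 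So it suffices to prove $k' \leq k$, because then $|S| \leq (n-4+k)/2 + 2 = (n+k)/2$, a contradiction.

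The main obstacle is the bad-vertex count. Only $u_{5}$ and $u_{6}$ lose degree, so the degree-2 vertices of $G'$ are those of $G$ other than $u_{1}$, plus possibly one new vertex $x \in \{u_{5}, u_{6}\}$. Both cannot occur, since they share the triangle $F_{5}$ with $u_{7}$ and $n' \geq 5$. Let $p$ and $w$ be the degree-2 vertices of $G$ immediately before and after $u_{1}$ on $C_{G}$. The path from $u_{1}$ to $w$ passes through $u_{3}, u_{4}$, which have degree at least 3, so $u_{1}$ is bad. The path from $p$ to $u_{1}$ ends with $u_{5}, u_{2}, u_{1}$, and $\deg_{G} u_{5} \geq 3$, so it has at least 3 edges and $p$ is bad. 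Thus $G$ has two bad vertices, $p$ and $u_{1}$, on the two arcs meeting at $u_{1}$. In $G'$ these arcs merge into the arc from $p$ to $w$, which may be split by $x$, giving at most two bad vertices there. All other arcs are unchanged, so $k' \leq k$. I would also check the degenerate possibilities $p = w$ or $w = x$; these only decrease $k'$.
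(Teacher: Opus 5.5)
Your proposal is correct and follows the paper's proof. It uses the same reduction, $G' = G - \{u_{1},u_{2},u_{3},u_{4}\}$ with $S = S' \cup \{u_{2},u_{3}\}$, and your arc-merging count supplies the justification for $k' \leq k$ that the paper only asserts. The degenerate case $w = x$ you mention cannot occur, since $\deg_{G} w = 2$ while $\deg_{G} u_{5}, \deg_{G} u_{6} \geq 3$.
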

  \begin{proof}[Proof of Claim~\ref{clm:4}]
    By Claim~\ref{clm:2} and \ref{clm:3}, $V(F_{5})$ is either
    $\{u_{4}, u_{6}, u_{7}\}$ or $\{u_{5}, u_{6}, u_{7}\}$.
    Suppose to the contrary that $V(F_{5}) = \{u_{5}, u_{6}, u_{7}\}$
    (See Fig.~\ref{fig:clm4}(a)).
    Let $G' = G - \{u_{1}, u_{2}, u_{3}, u_{4}\}$.
    $G'$ has $n' = n-4$ vertices and $k'$ bad vertices with
    $k' \leq k$.
    By the minimality of $G$, a minimum double dominating set $S'$
    of $G'$ satisfies $|S'| \leq (n'+k')/2 = (n+k-4)/2$.
    The set $S = S' \cup \{u_{2}, u_{3}\}$ is a double dominating set
    of $G$ and satisfies $\gamma_{\times 2}(G) \leq |S| = |S'| + 2 =
    (n+k)/2$, a contradiction.

    Hence $V(F_{5}) = \{u_{4}, u_{6}, u_{7}\}$.
  \end{proof}

  By Claim~\ref{clm:4}, if $\deg_{T} t_{5} = 3$, then the subgraph of
  $G$ induced by $\{u_{1}, u_{2}, \dots, u_{7}\}$ is isomorphic to the
  graph in Fig.~\ref{fig:lem}(c).
  Hence the property~(3) holds.

  We assume that $\deg_{T} t_{5} = 2$.
  Let $t_{6} (\neq t_{4})$ be the vertex adjacent to $t_{5}$, and let
  $F_{6}$ be the triangle corresponding to $t_{6}$.  Let $u_{8}$ be
  the vertex in $F_{6}$ that is not in $F_{5}$.  Since $G$ has at
  least 9 vertices, we have $\deg_{T} t_{6} \neq 1$.

  \begin{claim}
    \label{clm:5}
    $\deg_{T} t_{6} = 2$ and $V(F_{6}) = \{u_{6}, u_{7}, u_{8}\}$.
  \end{claim}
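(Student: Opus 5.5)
The plan is to argue exactly as in Claims~\ref{clm:2}--\ref{clm:4}: assume the conclusion fails, delete a few vertices near $u_{1}$ to get a smaller maximal outerplane graph $G'$, apply the minimality of $G$ to get a double dominating set $S'$ of $G'$, and add a few vertices to $S'$ to get a double dominating set of $G$ of size at most $(n+k)/2$, a contradiction. First I fix the local structure from the claims so far. Since $F_{2}=\{u_{2},u_{3},u_{4}\}$, $F_{3}=\{u_{2},u_{4},u_{5}\}$, $F_{4}=\{u_{4},u_{5},u_{6}\}$ and $F_{5}=\{u_{4},u_{6},u_{7}\}$, the outer cycle contains the segment $u_{4},u_{3},u_{1},u_{2},u_{5},u_{6},u_{7}$. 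In it $\deg_{G}u_{1}=2$, $\deg_{G}u_{2}=4$, $\deg_{G}u_{3}=\deg_{G}u_{5}=3$, and $u_{4}$ is adjacent to $u_{2},u_{3},u_{5},u_{6},u_{7}$. The triangle $F_{6}$ contains the edge of $F_{5}$ not shared with $F_{4}$, namely $u_{4}u_{7}$ or $u_{6}u_{7}$. Also $\deg_{T}t_{6}\in\{2,3\}$, since $G$ has at least $9$ vertices. So there are two things to exclude: $V(F_{6})=\{u_{4},u_{7},u_{8}\}$, and $\deg_{T}t_{6}=3$.

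\emph{Excluding $\deg_{T}t_{6}=3$.} I would mirror the two cases of Claim~\ref{clm:2}, splitting on which edge $F_{6}$ uses.
\begin{itemize}
\item If $F_{6}=\{u_{6},u_{7},u_{8}\}$ with $t_{6}$ of degree $3$, take $G'=G-\{u_{1},u_{2},u_{3}\}$, as in Case~2 of Claim~\ref{clm:2}. Here $n'=n-3$. If $k'\le k-1$, then $S=S'\cup\{u_{2},u_{3}\}$ works: $u_{1}$ sees $u_{2},u_{3}$, and $u_{4},u_{5}$ each gain $u_{2}$ besides their domination in $G'$. This also requires checking that $u_{4},u_{5}$ have degree at least $3$ in $G'$, so that $u_{1}$'s badness is simply lost.
\item If $F_{6}=\{u_{4},u_{7},u_{8}\}$ with $t_{6}$ of degree $3$, I would use a Case~1 type modification: delete two of the fan vertices and add a chord, so that a degree-$2$ vertex of $G'$ forces the needed neighbours into $S'$. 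Then add one vertex to $S'$.
\end{itemize}

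\emph{Excluding $V(F_{6})=\{u_{4},u_{7},u_{8}\}$ with $\deg_{T}t_{6}=2$.} Here $u_{4}$ is the centre of a long fan. I would take $G'=G-\{u_{1},u_{2},u_{3},u_{5}\}$, which remains maximal outerplane with $n'=n-4$. In $G'$ the vertex $u_{6}$ has degree $2$ with neighbours $u_{4},u_{7}$, so by our convention $u_{4},u_{7}\in S'$. Then $S=S'\cup\{u_{2},u_{3}\}$ double dominates $G$:
\begin{itemize}
\item $u_{1}$ is dominated by $u_{2}$ and $u_{3}$;
\item $u_{5}$ is dominated by $u_{2}$ and $u_{4}$;
\item $u_{6}$ is dominated by $u_{4}$ and $u_{7}$.
\end{itemize}
This gives $|S|\le (n+k')/2$, so it suffices to show $k'\le k$.

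I expect the main obstacle to be the bookkeeping of bad vertices in this last reduction. The bad vertex $u_{1}$ disappears and $u_{6}$ becomes a new degree-$2$ vertex. The gap from $u_{6}$ to the next degree-$2$ vertex is $3$ shorter than the gap from $u_{1}$, which can only help. However, the gap from the preceding degree-$2$ vertex $p$ grows by one, so $p$ might become newly bad. That happens only when the old gap from $p$ to $u_{1}$ was exactly $2$, i.e.\ $p$ is adjacent on $C_{G}$ to $u_{3}$, and hence $p$ is adjacent to $u_{3}$ and $u_{4}$. In that subcase the structure near $u_{3}$ is rigid: the triangle $\{p,u_{3},u_{4}\}$ is a leaf triangle, which contradicts $u_{3}$ having degree $3$ unless $p$ attaches across $u_{3}u_{4}$. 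I would try to show that this subcase contradicts the already-established shape of $F_{1},\dots,F_{5}$, or else handle it by deleting $p$ as well and adding one more vertex. Finally, in each of the remaining reductions I would carry out the same check of $k'$ against $k$.
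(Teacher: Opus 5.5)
There is a genuine gap in your first bullet, which excludes $\deg_T t_6=3$ when $V(F_6)=\{u_6,u_7,u_8\}$. You recorded $\deg_G u_5=3$; in fact $N_G(u_5)=\{u_2,u_4,u_6\}$, because $u_2u_5$ and $u_5u_6$ are outer edges. So in $G'=G-\{u_1,u_2,u_3\}$ the vertex $u_5$ has degree $2$, and the check you postpone fails. The outer cycle of $G'$ runs $\dots,u_7,u_4,u_5,u_6,\dots$. Hence $u_5$ is bad in $G'$ whenever the first degree-$2$ vertex after $u_6$ is not the cycle-successor of $u_6$; that vertex lies beyond the chord $u_6u_8$. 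For instance, take the region beyond $u_6u_8$ to be $u_6,a,b,u_8$ with chord $au_8$. The preceding degree-$2$ vertex lies beyond the chord $u_7u_8$ and stays bad. So $k'=k$, and $|S'|+2$ only gives $(n+k+1)/2$. Adding a single vertex cannot repair this, since $N_G[u_1]=\{u_1,u_2,u_3\}$ forces two new vertices into $S$. The paper instead deletes all of $u_1,\dots,u_5$. Because $F_6$ is internal, $u_6$ and $u_7$ keep degree at least $3$ in $G'$, so no new degree-$2$ vertex appears. This gives $n'=n-5$ and $k'=k-1$, and $S'\cup\{u_2,u_3,u_4\}$ double dominates $G$; in particular $u_5$ is seen by $u_2$ and $u_4$.

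Your second bullet is never actually specified, but no chord construction is needed. When $V(F_6)=\{u_4,u_7,u_8\}$, we have $N_G(u_6)=\{u_4,u_5,u_7\}$ regardless of $\deg_T t_6$. So your reduction $G-\{u_1,u_2,u_3,u_5\}$ with $S'\cup\{u_2,u_3\}$ handles $\deg_T t_6=3$ and $\deg_T t_6=2$ alike, which is exactly the paper's argument. In that reduction, however, your bad-vertex bookkeeping has the wrong sign. The outer path $u_4,u_3,u_1$ of length $2$ is replaced by the single edge $u_4u_6$, so the gap from the preceding degree-$2$ vertex $p$ shrinks by one rather than growing. Hence $k'\le k$ is immediate. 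The subcase you worry about would force $p=u_4$, the only cycle-neighbour of $u_3$ other than $u_1$, and $u_4$ has degree at least $5$.
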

  \begin{proof}[Proof of Claim~\ref{clm:5}]
    Suppose to the contrary that $\deg_{T} t_{6} = 3$.
    By Claim~\ref{clm:4}, $V(F_{6})$ is either $\{u_{6}, u_{7},
    u_{8}\}$ or $V(F_{6}) = \{u_{4}, u_{7}, u_{8}\}$.
    See Fig.~\ref{fig:clm5}.
    Note that $u_{1}$ is a bad vertex.

    First assume that $V(F_{6}) = \{u_{6}, u_{7}, u_{8}\}$.  Let
    $G' = G - \{u_{1}, u_{2}, u_{3}, u_{4}, u_{5}\}$.  $G'$ has
    $n' = n-5$ vertices and $k' = k-1$ bad vertices.
    By the
    minimality of $G$, a minimum double dominating set $S'$ of $G'$
    satisfies $|S'| \leq (n'+k')/2 = (n+k-6)/2$.
    The set $S = S' \cup \{u_{2}, u_{3}, u_{4}\}$ is a double
    dominating set of $G$ and satisfies $\gamma_{\times 2}(G) \leq |S| =
    |S'| + 3 = (n+k)/2$, a contradiction.

    Next assume that $V(F_{6}) = \{u_{4}, u_{7}, u_{8}\}$.  Let
    $G' = G - \{u_{1}, u_{2}, u_{3}, u_{5}\}$.  $G'$ has $n' = n-4$
    vertices and $k' \leq k$ bad vertices.  Thus, a minimum double
    dominating set $S'$ of $G'$ satisfies
    $|S'| \leq (n'+k')/2 = (n+k-4)/2$,
    and $S'$ contains $u_{4}$ and $u_{7}$ since
    $\deg_{G'} u_{6} = 2$.  The set $S = S' \cup \{u_{2}, u_{3}\}$ is
    a double dominating set of $G$ and satisfies
    $\gamma_{\times 2}(G) \leq |S| = |S'| + 2 = (n+k)/2$, a contradiction.

    Hence we have $\deg_{T} t_{6} \neq 3$, and thus
    $\deg_{T} t_{6} = 2$.
    Furthermore, if $V(F_{6}) = \{u_{4}, u_{7}, u_{8}\}$ and
    $\deg_{T} t_{6} = 2$, then we obtain a contradiction by the similar
    argument.
    Hence we obtain $V(F_{6}) = \{u_{6}, u_{7}, u_{8}\}$.
  \end{proof}

  \begin{figure}[tbp]
    \centering
    \includegraphics{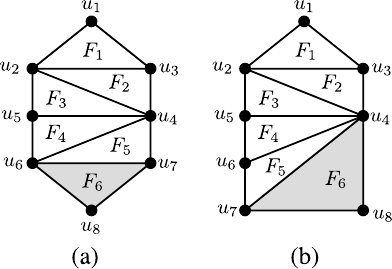}
    \caption{Claim~\ref{clm:5}.  Possible triangles adjacent to $F_{5}$
      when (a) $F_{6} = \{u_{6},u_{7},u_{8}\}$ and (b)
      $F_{5} = \{u_{4},u_{7},u_{8}\}$.}
    \label{fig:clm5}
  \end{figure}

  By Claim~\ref{clm:5}, we have $\deg_{T} t_{6} = 2$, and thus
  $\dist_{T}(t, t_{1}) \neq 5$.
  Let $t_{7} (\neq t_{5})$ be the vertex adjacent to $t_{6}$, and let
  $F_{7}$ be the triangle corresponding to $t_{7}$.  Let $u_{9}$ be
  the vertex in $F_{7}$ that is not in $F_{6}$.

  \begin{claim}
    \label{clm:6}
    $V(F_{7}) = \{u_{6}, u_{8}, u_{9}\}$ and $\deg_{T} t_{7} = 3$.
  \end{claim}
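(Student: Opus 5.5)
The plan is to argue by contradiction, as in Claims~\ref{clm:2}--\ref{clm:5}. In each bad configuration I delete a block of vertices from the initial zigzag $u_{1},\dots,u_{8}$, apply minimality to the smaller maximal outerplane graph $G'$, and add back a few of $u_{2},u_{3},u_{4},\dots$.

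First, the structure. From $V(F_{1}),\dots,V(F_{6})$ as fixed by Claims~\ref{clm:3}--\ref{clm:5}, the outer cycle contains the path $u_{6}u_{5}u_{2}u_{1}u_{3}u_{4}u_{7}$, and $u_{6}u_{7}$ is a chord. Triangle $F_{6}$ lies on the other side of this chord, so $F_{7}$ shares either $u_{6}u_{8}$ or $u_{7}u_{8}$ with $F_{6}$. Hence $V(F_{7})\in\{\{u_{6},u_{8},u_{9}\},\{u_{7},u_{8},u_{9}\}\}$. So the claim splits into two parts: (A) excluding $V(F_{7})=\{u_{7},u_{8},u_{9}\}$, with $\deg_{T}t_{7}$ arbitrary; and (B) when $V(F_{7})=\{u_{6},u_{8},u_{9}\}$, excluding $\deg_{T}t_{7}\in\{1,2\}$.

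For (A), $u_{6}u_{8}$ is an outer edge. I would take $G'=G-\{u_{1},\dots,u_{5}\}$. In $G'$ the vertex $u_{6}$ has degree $2$ (neighbours $u_{7},u_{8}$), so by our convention $S'\ni u_{7},u_{8}$. Put $S=S'\cup\{u_{2},u_{3},u_{4}\}$. Then $u_{1}$ is dominated by $u_{2}$ and $u_{3}$, $u_{5}$ by $u_{2}$ and $u_{4}$, and $u_{6}$ by $u_{4}$ and $u_{7}$; the remaining vertices are dominated in $G'$. This gives $|S|\le (n-5+k')/2+3$, which is at most $(n+k)/2$ provided $k'\le k-1$. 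The bad vertex $u_{1}$ disappears, but $u_{6}$ is a new degree-$2$ vertex that may itself be bad. If that bookkeeping fails, I would fall back on a reduction like Case~1 of Claim~\ref{clm:2}: delete vertices and add an edge such as $u_{1}u_{6}$ or $u_{5}u_{7}$, so that the number of deleted vertices is even and $k'\le k$ suffices.

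For (B), if $\deg_{T}t_{7}=1$ then $n=9$ and $G$ is explicit. I would simply exhibit a double dominating set of size at most $(9+k)/2$, for instance $\{u_{2},u_{3},u_{4},u_{6},u_{8}\}$ with $k\ge1$. If $\deg_{T}t_{7}=2$, then $F_{8}$ is attached along $u_{6}u_{9}$ or $u_{8}u_{9}$. The pattern $u_{4},u_{5},u_{6},u_{7},u_{8}$ then repeats the local configuration around $u_{2},\dots,u_{5}$ shifted by $4$. I would therefore delete $\{u_{1},\dots,u_{4}\}$ or $\{u_{1},\dots,u_{5},u_{7}\}$ (or the block making some $u_{j}$ of degree $2$ in $G'$), and add $u_{2},u_{3}$ or $u_{2},u_{3},u_{4}$, exactly mirroring Claims~\ref{clm:4} and~\ref{clm:5}.

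The main obstacle is the bad-vertex count $k'$ versus $k$ in each reduction. Deleting vertices near $u_{1}$ removes the bad vertex $u_{1}$, but it can also create a new degree-$2$ vertex, or change the spacing between consecutive degree-$2$ vertices so that a previously good vertex becomes bad. I would handle this by choosing the deleted set so that the outer path between the neighbouring degree-$2$ vertices shrinks rather than grows. Where a parity issue remains, I would adjust by one vertex, using an added chord as in Claim~\ref{clm:2}.
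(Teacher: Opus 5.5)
Your structural analysis, the case split, and the $\deg_T t_7=1$ case are fine, but part (A) does not close. Deleting the five vertices $u_1,\dots,u_5$ and adding three back needs $k'\le k-1$, and you cannot guarantee that.

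In $G'=G-\{u_1,\dots,u_5\}$ the new degree-$2$ vertex $u_6$ just takes the place of the bad vertex $u_1$. Suppose no degree-$2$ vertex of $G'$ lies at outer distance at most $2$ from $u_6$. Then $u_6$ is bad, and the preceding degree-$2$ vertex (already bad in $G$) stays bad, so $k'=k$. For $n+k$ odd this gives only $|S|\le (n+k-5)/2+3=(n+k+1)/2$, which is no contradiction. Your fallback is never actually carried out, so as written (A) is not proved.

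The paper avoids the parity issue by deleting $u_6$ too. The graph $G'=G-\{u_1,\dots,u_6\}$ keeps $u_7u_8$ on its outer cycle, and $S=S'\cup\{u_2,u_3,u_6\}$ gives each of $u_1,u_4,u_5$ two neighbours in $S$. Six deletions against three additions need only $k'\le k$. This holds because $u_1$ disappears, at most one new degree-$2$ vertex ($u_7$ or $u_8$) appears, and the predecessor of $u_1$ was already bad. (For $n=9$ that $G'$ is a triangle, so this one instance must be checked directly.)

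Your fallback can in fact be made concrete, and it proves more than is needed. Let $G^{*}=G-\{u_1,u_2,u_3,u_4\}+u_5u_7$. This is maximal outerplane on $n-4\ge 5$ vertices, and $u_5$ becomes an ear on the chord $u_6u_7$, replacing $u_1$ as a degree-$2$ vertex. No outer gap between consecutive degree-$2$ vertices grows, so $k^{*}\le k$. The convention forces $u_6,u_7\in S^{*}$, and $S^{*}\cup\{u_2,u_3\}$ double dominates $G$: $u_1$ via $u_2,u_3$; $u_4$ via $u_2,u_3,u_6,u_7$; $u_5$ via $u_2,u_6$. Hence $\gamma_{\times 2}(G)\le (n+k)/2$. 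This uses only the configuration fixed by Claims~\ref{clm:3}--\ref{clm:5}, not $F_7$, so it settles (A) and (B) at once.

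By contrast, the bare deletion of $\{u_1,\dots,u_4\}$ that you list for (B) is not admissible. With $V(F_5)=\{u_4,u_6,u_7\}$ it leaves $u_5$ with the single neighbour $u_6$; the reduction in Claim~\ref{clm:4} relied on $V(F_5)=\{u_5,u_6,u_7\}$. The added edge $u_5u_7$ is exactly what repairs this.

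If you follow the paper for $\deg_T t_7=2$ instead, commit to $G'=G-\{u_1,\dots,u_5,u_7\}$. According as $F_8$ sits on $u_8u_9$ or $u_6u_9$, the vertex $u_6$ or $u_8$ has degree $2$ in $G'$. This forces $u_8,u_9$ or $u_6,u_9$ into $S'$. Adding $u_2,u_3,u_4$ then works with $k'\le k$ (the paper adds $u_7$ instead of $u_4$ in the second case).
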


  \begin{figure}[tbp]
    \centering
    \includegraphics[width=\textwidth]{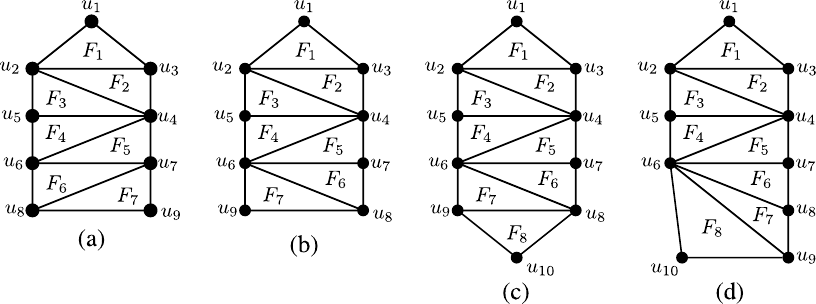}
    \caption{Claim~\ref{clm:6}.  Possible triangles adjacent to $F_{6}$.}
    \label{fig:clm6}
  \end{figure}

  \begin{proof}[Proof of Claim~\ref{clm:6}]
    By Claim~\ref{clm:5}, we have
    $V(F_{6}) = \{u_{6}, u_{7}, u_{8}\}$.  Thus $V(F_{7})$ is either
    $\{u_{7}, u_{8}, u_{9}\}$ or $\{u_{6}, u_{8}, u_{9}\}$.

    Suppose to the contrary that $V(F_{7}) = \{u_{7}, u_{8}, u_{9}\}$
    (See Fig.~\ref{fig:clm6}(a)).  Let
    $G' = G - \{u_{1}, u_{2}, u_{3}, u_{4}, u_{5}, u_{6}\}$.  $G'$
    has $n-6$ vertices and $k' \leq k$ bad vertices.  Thus, a minimum
    double dominating set $S'$ of $G'$ satisfies
    $|S'| \leq (n'+k')/2 = (n+k-6)/2$.
    The set $S = S' \cup \{u_{2}, u_{3}, u_{6}\}$ is a
    double dominating set of $G$ and satisfies
    $\gamma_{\times 2}(G) \leq |S| = |S'| + 3 = (n+k)/2$, a contradiction.

    Hence we have $V(F_{7}) = \{u_{6}, u_{8}, u_{9}\}$ (See
    Fig.~\ref{fig:clm6}(b)).

    Next we show that $\deg_{T} t_{7} = 3$.  First assume that
    $\deg_{T} t_{7} = 1$.
    In this case, $G$ is isomorphic to Fig.~\ref{fig:clm6}(b), and
    thus $\gamma_{\times 2}(G) = 4 \leq (n+k)/2$ since
    $\{u_{2}, u_{3}, u_{6}, u_{8}\}$ is a minimum double dominating
    set.

    Next assume that $\deg_{T} t_{7} = 2$.
    Let $t_{8} (\neq t_{6})$ be the vertex adjacent to $t_{7}$, and
    let $F_{8}$ be the triangle corresponding to $t_{8}$.
    Let $u_{10}$ be the vertex in $F_{8}$ that is not in $F_{7}$.
    In this case, $V(F_{8})$ is either $\{u_{8}, u_{9}, u_{10}\}$
    (Fig.\ref{fig:clm6}(c)) or $\{u_{6}, u_{9}, u_{10}\}$
    (Fig.\ref{fig:clm6}(d)).
    For both cases, let
    $G' = G - \{u_{1}, u_{2}, u_{3}, u_{4}, u_{5}, u_{7}\}$.
    $G'$ has $n-6$ vertices and $k' \leq k$ bad vertices.
    Thus, a minimum double dominating set $S'$ of $G'$ satisfies
    $|S'| \leq (n'+k')/2 = (n+k-6)/2$.

    If $V(F_{8}) = \{u_{8}, u_{9}, u_{10}\}$, then $S'$ contains
    $u_{8}$ and $u_{9}$ since $\deg_{G'} u_{6} = 2$, thus
    $S = S' \cup \{u_{2}, u_{3}, u_{4}\}$ is a double dominating set
    of $G$ and satisfies
    $\gamma_{\times 2}(G) \leq |S| = |S'| + 3 = (n+k)/2$, a contradiction.
    If $V(F_{8}) = \{u_{6}, u_{9}, u_{10}\}$, then $S'$ contains
    $u_{6}$ and $u_{9}$ since $\deg_{G'} u_{8} = 2$, thus
    $S = S' \cup \{u_{2}, u_{3}, u_{7}\}$ is a double dominating set
    of $G$ and satisfies
    $\gamma_{\times 2}(G) \leq |S| = |S'| + 3 = (n+k)/2$, a
    contradiction.

    From the above discussion, we have $\deg_{T} t_{7} \neq 1$ and
    $\deg_{T} t_{7} \neq 2$.
    Hence we obtain $\deg_{T} t_{7} = 3$.
  \end{proof}

  If $G$ has $n \geq 9$ vertices and there is no vertex of degree 3 in
  $T$, then it contradicts Claim~\ref{clm:6}.
  Hence we obtain the property~(1).

  If $t$ is the nearest vertex of degree 3 from $t_{1}$ in $T$, then
  $\dist_{T}(t_{1}, t) \leq 6$.  If $t = t_{7}$, then
  $\dist_{T}(t_{1}, t) = 6$ and the property~(2) holds.  If
  $t=t_{6}$, then $F_{7}$ is a internal triangle, and
  $\{u_{1}, u_{2}, \dots,u_{9}\}$ induces the subgraph in
  Fig.~\ref{fig:lem}(d).
  Hence the property~(4) holds.
\end{proof}

By Lemma~\ref{lem:dist}, there is a vertex of degree 3 in $T$.
Let $s_{1}$ and $t_{1}$ be vertices of degree 1 of $T$ such that they
have the common nearest vertex $t$ of degree 3.
Let $\dist_{T}(s_{1}, t) = d_{s}$ and $\dist_{T}(t_{1}, t) = d_{t}$.
Without loss of generality, we may assume that $d_{s} \leq d_{t}$.
By Lemma~\ref{lem:dist}, $d_{s}, d_{t} \in \{1, 2, 4, 6\}$, and the
subgraphs induced by the vertices on the triangles that are
corresponding to the paths from $s_{1}$ or $t_{1}$ to $t$ in $T$ are
isomorphic to the graphs in Fig.~\ref{fig:lem}.

In the following, the subgraph induced by the triangles that are
corresponding to the vertices in the path from $s_{1}$ to $t$ has
vertices $u_{1}, u_{2}, \dots$.
Similarly, the subgraph induced by the triangles that are
corresponding to the vertices in the path from $t_{1}$ to $t$ has
vertices $v_{1}, v_{2}, \dots$.

We then consider the four cases when (1) $d_{s} = 1$, (2) $d_{s} = 2$,
(3) $d_{s} = 4$, and (4) $d_{s} = 6$.

\begin{figure}[tbp]
  \centering
  \includegraphics{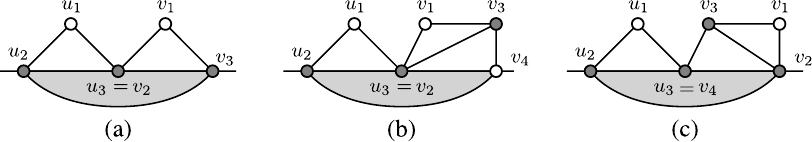}
  \caption{Case~1 and Case~2.  Possible situations when $d_s = 1$ and
    $d_{t} = 1$ or 2.  The shaded triangle is an internal triangle
    corresponding to $t$.  The gray vertices are in a double
    dominating set $S$.}
  \label{fig:ds1-2}
\end{figure}

\vspace{1em}
\noindent
(Case~1) When $d_{s} = 1$.
We then consider the four subcases.

(Case~1-1) $d_{t} = 1$.
See Fig.~\ref{fig:ds1-2}(a).

Let $G' = G - \{u_{1}, v_{1}\}$.  $G'$ has $n-2$ vertices and
$k' \leq k-1$ bad vertices.  Thus, a minimum double dominating set
$S'$ of $G'$ satisfies
$|S'| \leq (n'+k')/2 \leq (n+k-2)/2$,
and $S'$ includes $u_{2}$ and $v_{3}$ since $\deg_{G'} u_{3} = 2$.
The set $S = S' \cup \{u_{3}\}$ is a double dominating set of $G$ and
satisfies
$\gamma_{\times 2}(G) \leq |S| = |S'| + 1 \leq (n+k)/2$, a
contradiction.

\vspace{1em}
(Case~1-2) $d_{t} = 2$.
See Fig.~\ref{fig:ds1-2}(b) and (c).

For Fig.~\ref{fig:ds1-2}(b), let $G' = G - \{v_{1}, u_{3}\}$.
Note that $v_{1}$ is a bad vertex of $G$, and $u_{1}$ is bad in $G'$.
Thus $G'$ has $n-2$ vertices and $k' = k$ bad vertices, and a
minimum double dominating set $S'$ of $G'$ satisfies
$|S'| \leq (n'+k')/2 \leq (n+k-2)/2$,
and $S'$ includes $u_{2}$ and $u_{3}$ since $\deg_{G'} u_{1} = 2$.
The set $S = S' \cup \{v_{3}\}$ is a double dominating set of $G$ and
satisfies
$\gamma_{\times 2}(G) \leq |S| = |S'| + 1 \leq (n+k)/2$, a
contradiction.

For Fig.~\ref{fig:ds1-2}(c), let $G' = G - \{u_{1}, v_{1}, u_{3}\}$.
If $v_{1}$ is bad (resp.~good) in $G$, then $u_{3}$ is bad
(resp.~good) in $G'$.  Since $u_{1}$ is a bad vertex in $G$, $G'$ has
$k' = k-1$ bad vertices.  Thus, a minimum double dominating set $S'$
of $G'$ satisfies $|S'| \leq (n'+k')/2 \leq (n+k-4)/2$, and $S'$
includes $u_{2}$ and $v_{2}$ since $\deg_{G'} u_{3} = 2$.  The set
$S = S' \cup \{u_{3}, v_{3}\}$ is a double dominating set of $G$ and
satisfies $\gamma_{\times 2}(G) \leq |S| = |S'| + 2 \leq (n+k)/2$, a
contradiction.

\begin{figure}[tbp]
  \centering
  \includegraphics{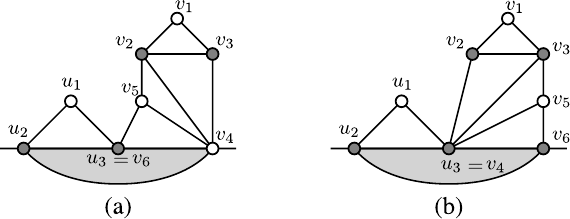}
  \caption{Case~1-3.  Possible situations when $d_s = 1$ and
    $d_{t} = 4$.  The shaded triangle is an internal triangle
    corresponding to $t$.  The gray vertices are in a double
    dominating set $S$.}
  \label{fig:ds1-4}
\end{figure}

\vspace{1em}
(Case~1-3) $d_{t} = 4$.
See Fig.~\ref{fig:ds1-4}.

For Fig.~\ref{fig:ds1-4}(a), let $G' = G - \{v_{1}, v_{2}, v_{3},
v_{5}\}$.
Then $G'$ has $n-4$ vertices and $k' \leq k-1$ bad vertices, and a
minimum double dominating set $S'$ of $G'$ satisfies
$|S'| \leq (n'+k')/2 \leq (n+k-5)/2$,
and $S'$ includes $u_{2}$ and $u_{3}$ since $\deg_{G'} u_{1} = 2$.
The set $S = S' \cup \{v_{2}, v_{3}\}$ is a double dominating set of
$G$ and satisfies
$\gamma_{\times 2}(G) \leq |S| = |S'| + 2 \leq (n+k)/2$, a
contradiction.

For Fig.~\ref{fig:ds1-4}(b), let
$G' = G - \{u_{1}, v_{1}, v_{2}, v_{3}, v_{5}\}$.  Then $G'$ has $n-5$
vertices and $k' \leq k-1$ bad vertices, and a minimum double
dominating set $S'$ of $G'$ satisfies
$|S'| \leq (n'+k')/2 \leq (n+k-6)/2$, and $S'$ includes $u_{2}$ and
$v_{6}$ since $\deg_{G'} u_{3} = 2$.
The set $S = S' \cup \{u_{3}, v_{2}, v_{3}\}$ is a double dominating
set of $G$ and satisfies
$\gamma_{\times 2}(G) \leq |S| = |S'| + 3 \leq (n+k)/2$, a
contradiction.

\begin{figure}[tbp]
  \centering
  \includegraphics{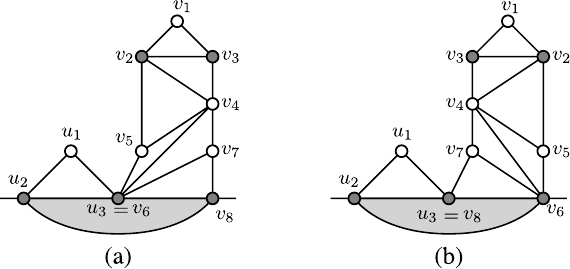}
  \caption{Case~1-4.  Possible situations when $d_s = 1$ and
    $d_{t} = 6$.  The shaded triangle is an internal triangle
    corresponding to $t$.  The gray vertices are in a double
    dominating set $S$.}
  \label{fig:ds1-6}
\end{figure}

\vspace{1em}
(Case~1-4) $d_{t} = 6$.
See Fig.~\ref{fig:ds1-6}(a) and (b).

For both cases, let $G' = G - \{v_{1}, v_{2}, v_{3}, v_{4}, v_{5}\}$.
Note that $u_{1}$ is bad in $G$ and is good in $G'$.  Then $G'$ has
$n-5$ vertices and $k' \leq k-1$ bad vertices, and a minimum double
dominating set $S'$ of $G'$ satisfies
$|S'| \leq (n'+k')/2 \leq (n+k-6)/2$.
For Fig.~\ref{fig:ds1-6}(a), $S'$ includes $u_{2}, u_{3}$ and $v_{8}$
since $\deg_{G'} u_{1} = \deg_{G'} v_{7} = 2$.
The set $S = S' \cup \{v_{2}, v_{3}\}$ is a double dominating set of
$G$ and satisfies
$\gamma_{\times 2}(G) \leq |S| = |S'| + 2 \leq (n+k)/2$, a
contradiction.
For Fig.~\ref{fig:ds1-6}(b), similarly $S'$ includes $u_{2}, u_{3}$
and $v_{6}$.
The set $S = S' \cup \{v_{2}, v_{3}\}$ is a double dominating set of
$G$ and satisfies
$\gamma_{\times 2}(G) \leq |S| = |S'| + 2 \leq (n+k)/2$, a
contradiction.

%%%%%%%%%%%%%%%%%%%%%%%%%%%%%%%%%%%%%%%%%%%%%%%%%%%
\vspace{1em}
\noindent
(Case~2) When $d_{s} = 2$.
We then consider the three subcases.

\begin{figure}[tbp]
  \centering
  \includegraphics{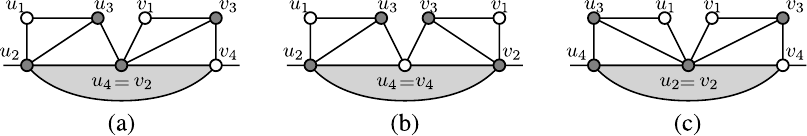}
  \caption{Case 2-1.  Possible situations when $d_s = d_{t} = 2$.  The
    shaded triangle is an internal triangles corresponding to $t$.
    The gray vertices are in a double dominating set $S$.}
  \label{fig:ds_22}
\end{figure}

\vspace{1em}
(Case~2-1) $d_{t} = 2$.  See Fig.~\ref{fig:ds_22}.

For Fig.~\ref{fig:ds_22}(a), let $G' = G - \{u_{1}, v_{1}, v_{3}\}$.
$G'$ has $n-3$ vertices and $k' \leq k-1$ bad vertices.
Thus, a minimum double dominating set $S'$ of $G'$ satisfies
$|S'| \leq (n'+k')/2 \leq (n+k-4)/2$, and $S'$ includes $u_{2}$ and
$u_{4}$ since $\deg_{G'} u_{3} = 2$.
The set $S = S' \cup \{u_{3}, v_{3}\}$ is a double dominating set of
$G$ and satisfies
$\gamma_{\times 2}(G) \leq |S| = |S'| + 2 \leq (n+k)/2$, a
contradiction.

For Fig.~\ref{fig:ds_22}(b), let
$G' = G - \{u_{1}, u_{3}, v_{1}, v_{3}\}$.
$G'$ has $n-4$ vertices and $k' \leq k-1$ bad vertices.
Thus, a minimum double dominating set $S'$ of $G'$ satisfies
$|S'| \leq (n'+k')/2 \leq (n+k-5)/2$, and $S'$ includes $u_{2}$ and
$v_{2}$ since $\deg_{G'} u_{4} = 2$.
The set $S = S' \cup \{u_{3}, v_{3}\}$ is a double dominating set of
$G$ and satisfies
$\gamma_{\times 2}(G) \leq |S| = |S'| + 2 \leq (n+k)/2$, a
contradiction.

For Fig.~\ref{fig:ds_22}(c), let $G' = G - \{u_{1}, v_{1}, v_{3}\}$.
$G'$ has $n-3$ vertices and $k' \leq k-1$ bad vertices.
Thus, a minimum double dominating set $S'$ of $G'$ satisfies
$|S'| \leq (n'+k')/2 \leq (n+k-4)/2$, and $S'$ includes $u_{2}$ and
$u_{4}$ since $\deg_{G'} u_{3} = 2$.
The set $S = S' \cup \{u_{3}, v_{3}\}$ is a double dominating set of
$G$ and satisfies
$\gamma_{\times 2}(G) \leq |S| = |S'| + 2 \leq (n+k)/2$, a
contradiction.

\begin{figure}[tbp]
  \centering
  \includegraphics[width=\textwidth]{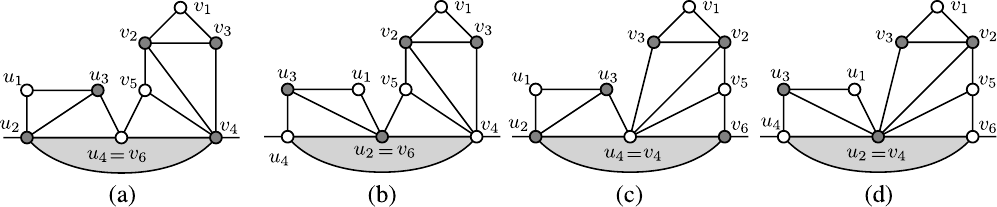}
  \caption{Case~2-2.  Possible situations when $d_s = 2$ and
    $d_{t} = 4$.  The shaded triangle is an internal triangle
    corresponding to $t$.  The gray vertices are in a double
    dominating set $S$.}
  \label{fig:ds_24}
\end{figure}

\vspace{1em}
(Case~2-2) $d_{t} = 4$.
See Fig.~\ref{fig:ds_24}.

For Fig.~\ref{fig:ds_24}(a), let
$G' = G - \{u_{1}, u_{3}, v_{1}, v_{2}, v_{3}, v_{5}\}$.
$G'$ has $n-6$ vertices and $k' \leq k-1$ bad vertices.
Thus, a minimum double dominating set $S'$ of $G'$ satisfies
$|S'| \leq (n'+k')/2 \leq (n+k-7)/2$, and $S'$ includes $u_{2}$ and
$v_{4}$ since $\deg_{G'} u_{4} = 2$.
The set $S = S' \cup \{u_{3}, v_{2}, v_{3}\}$ is a double dominating
set of $G$ and satisfies
$\gamma_{\times 2}(G) \leq |S| = |S'| + 3 \leq (n+k)/2$, a
contradiction.

For Fig.~\ref{fig:ds_24}(b), let
$G' = G - \{v_{1}, v_{2}, v_{3}, v_{5}\}$.
$G'$ has $n-4$ vertices and $k' \leq k-1$ bad vertices.
Thus, a minimum double dominating set $S'$ of $G'$ satisfies
$|S'| \leq (n'+k')/2 \leq (n+k-5)/2$, and $S'$ includes $u_{2}$ and
$u_{3}$ since $\deg_{G'} u_{1} = 2$.
The set $S = S' \cup \{v_{2}, v_{3}\}$ is a double dominating set of
$G$ and satisfies
$\gamma_{\times 2}(G) \leq |S| = |S'| + 2 \leq (n+k)/2$, a
contradiction.

For Fig.~\ref{fig:ds_24}(c), let
$G' = G - \{u_{1}, u_{3}, v_{1}, v_{2}, v_{3}, v_{5}\}$.
$G'$ has $n-6$ vertices and $k' \leq k-1$ bad vertices.
Thus, a minimum double dominating set $S'$ of $G'$ satisfies
$|S'| \leq (n'+k')/2 \leq (n+k-7)/2$, and $S'$ includes $u_{2}$ and
$v_{6}$ since $\deg_{G'} u_{4} = 2$.
The set $S = S' \cup \{u_{3}, v_{2}, v_{3}\}$ is a double dominating
set of $G$ and satisfies
$\gamma_{\times 2}(G) \leq |S| = |S'| + 3 \leq (n+k)/2$, a
contradiction.

For Fig.~\ref{fig:ds_24}(d), let
$G' = G - \{v_{1}, v_{2}, v_{3}, v_{5}\}$.
$G'$ has $n-4$ vertices and $k' \leq k-1$ bad vertices.
Thus, a minimum double dominating set $S'$ of $G'$ satisfies
$|S'| \leq (n'+k')/2 \leq (n+k-5)/2$, and $S'$ includes $u_{2}$ and
$u_{3}$ since $\deg_{G'} u_{1} = 2$.
The set $S = S' \cup \{v_{2}, v_{3}\}$ is a double dominating set of
$G$ and satisfies
$\gamma_{\times 2}(G) \leq |S| = |S'| + 2 \leq (n+k)/2$, a
contradiction.

\begin{figure}[tbp]
  \centering
  \includegraphics[width=\textwidth]{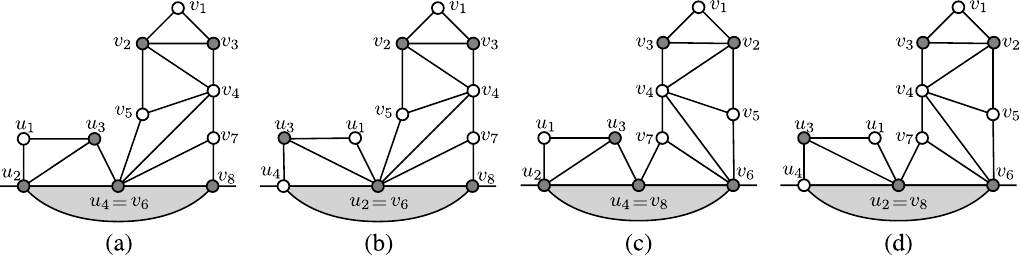}
  \caption{Case~2-3.  Possible situations when $d_s = 2$ and
    $d_{t} = 6$.  The shaded triangle is an internal triangle
    corresponding to $t$.  The gray vertices are in a double
    dominating set $S$.}
  \label{fig:ds_26}
\end{figure}

\vspace{1em}
(Case~2-3) $d_{t} = 6$.
See Fig.~\ref{fig:ds_26}.

For Fig.~\ref{fig:ds_26}(a), let
$G' = G - \{u_{1}, u_{3}, v_{1}, v_{2}, v_{3}, v_{4}, v_{5},
v_{7}\}$.  $G'$ has $n-8$ vertices and $k' \leq k-1$ bad vertices.
Thus, a minimum double dominating set $S'$ of $G'$ satisfies
$|S'| \leq (n'+k')/2 \leq (n+k-9)/2$,
and $S'$ includes $u_{2}$ and $v_{8}$ since $\deg_{G'} u_{4} = 2$.
The set $S = S' \cup \{u_{3}, u_{4}, v_{2}, v_{3}\}$ is a double
dominating set of $G$ and satisfies
$\gamma_{\times 2}(G) \leq |S| = |S'| + 4 \leq (n+k)/2$, a
contradiction.

For Fig.~\ref{fig:ds_26}(b), let
$G' = G - \{v_{1}, v_{2}, v_{3}, v_{4}, v_{5}, v_{7}\}$.  $G'$ has
$n-6$ vertices and $k' \leq k-1$ bad vertices.  Thus, a minimum double
dominating set $S'$ of $G'$ satisfies
$|S'| \leq (n'+k')/2 \leq (n+k-7)/2$,
and $S'$ includes $u_{2}$ and $u_{3}$ since $\deg_{G'} u_{1} = 2$.
The set $S = S' \cup \{v_{2}, v_{3}, v_{8}\}$ is a double dominating
set of $G$ and satisfies
$\gamma_{\times 2}(G) \leq |S| = |S'| + 3 \leq (n+k)/2$, a
contradiction.

For Fig.~\ref{fig:ds_26}(c), let
$G' = G - \{u_{1}, u_{3}, v_{1}, v_{2}, v_{3}, v_{4}, v_{5}, v_{7}\}$.
$G'$ has $n-8$ vertices and $k' \leq k-1$ bad vertices.
Thus, a minimum double dominating set $S'$ of $G'$ satisfies
$|S'| \leq (n'+k')/2 \leq (n+k-9)/2$, and $S'$ includes $u_{2}$ and
$v_{6}$ since $\deg_{G'} u_{4} = 2$.
The set $S = S' \cup \{u_{3}, u_{4}, v_{2}, v_{3}\}$ is a double
dominating set of $G$ and satisfies
$\gamma_{\times 2}(G) \leq |S| = |S'| + 4 \leq (n+k)/2$, a
contradiction.

For Fig.~\ref{fig:ds_26}(d), let
$G' = G - \{v_{1}, v_{2}, v_{3}, v_{4}, v_{5}, v_{7}\}$.
$G'$ has $n-6$ vertices and $k' \leq k-1$ bad vertices.
Thus, a minimum double dominating set $S'$ of $G'$ satisfies
$|S'| \leq (n'+k')/2 \leq (n+k-7)/2$, and $S'$ includes $u_{2}$ and
$u_{3}$ since $\deg_{G'} u_{1} = 2$.
The set $S = S' \cup \{v_{2}, v_{3}, v_{6}\}$ is a double dominating
set of $G$ and satisfies
$\gamma_{\times 2}(G) \leq |S| = |S'| + 3 \leq (n+k)/2$, a
contradiction.

%%%%%%%%%%%%%%%%%%%%%%%%%%%%%%%%%%%%%%%%%%%%%%%%%%%
\vspace{1em}
\noindent
(Case~3) When $d_{s} = 4$.  We then consider the two cases.

\begin{figure}[tbp]
  \centering
  \includegraphics[]{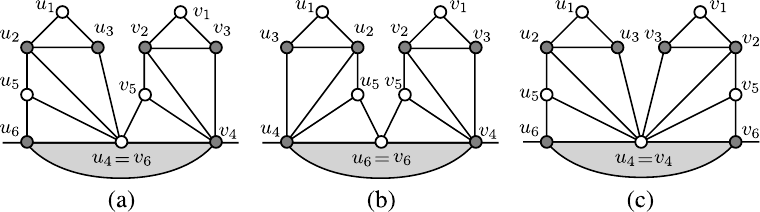}
  \caption{Case~3-1.  Possible situations when $d_s = 4$ and
    $d_{t} = 4$.  The shaded triangle is an internal triangle
    corresponding to $t$.  The gray vertices are in a double
    dominating set $S$.}
  \label{fig:ds_44}
\end{figure}

\vspace{1em}
(Case~3-1) $d_{t} = 4$.
See Fig.~\ref{fig:ds_44}.

For Fig.~\ref{fig:ds_44}(a), let
$G' = G - \{u_{1}, u_{2}, u_{3}, u_{5}, v_{1}, v_{2}, v_{3}, v_{5}\}$.
$G'$ has $n-8$ vertices and $k' \leq k-1$ bad vertices.  Thus, a
minimum double dominating set $S'$ of $G'$ satisfies
$|S'| \leq (n'+k')/2 \leq (n+k-9)/2$,
and $S'$ includes $u_{6}$ and $v_{4}$ since $\deg_{G'} u_{4} = 2$.
The set $S = S' \cup \{u_{2}, u_{3}, v_{2}, v_{3}\}$ is a double
dominating set of $G$ and satisfies
$\gamma_{\times 2}(G) \leq |S| = |S'| + 4 \leq (n+k)/2$, a
contradiction.

For Fig.~\ref{fig:ds_44}(b), let
$G' = G - \{u_{1}, u_{2}, u_{3}, u_{5}, v_{1}, v_{2}, v_{3}, v_{5}\}$.
$G'$ has $n-8$ vertices and $k' \leq k-1$ bad vertices.  Thus, a
minimum double dominating set $S'$ of $G'$ satisfies
$|S'| \leq (n'+k')/2 \leq (n+k-9)/2$,
and $S'$ includes $u_{4}$ and $v_{4}$ since $\deg_{G'} u_{6} = 2$.
The set $S = S' \cup \{u_{2}, u_{3}, v_{2}, v_{3}\}$ is a double
dominating set of $G$ and satisfies
$\gamma_{\times 2}(G) \leq |S| = |S'| + 4 \leq (n+k)/2$, a
contradiction.

For Fig.~\ref{fig:ds_44}(c), let
$G' = G - \{u_{1}, u_{2}, u_{3}, u_{5}, v_{1}, v_{2}, v_{3}, v_{5}\}$.
$G'$ has $n-8$ vertices and $k' \leq k-1$ bad vertices.  Thus, a
minimum double dominating set $S'$ of $G'$ satisfies
$|S'| \leq (n'+k')/2 \leq (n+k-9)/2$,
and $S'$ includes $u_{6}$ and $v_{6}$ since $\deg_{G'} u_{4} = 2$.
The set $S = S' \cup \{u_{2}, u_{3}, v_{2}, v_{3}\}$ is a double
dominating set of $G$ and satisfies
$\gamma_{\times 2}(G) \leq |S| = |S'| + 4 \leq (n+k)/2$, a
contradiction.

\begin{figure}[tbp]
  \centering
  \includegraphics[width=\textwidth]{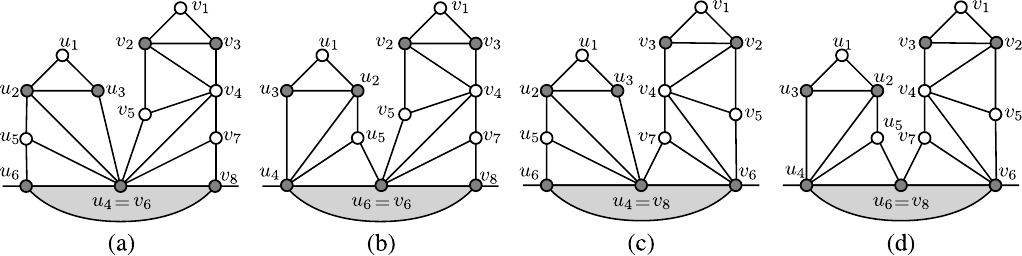}
  \caption{Case~3-2.  Possible situations when $d_s = 4$ and
    $d_{t} = 6$.  The shaded triangle is an internal triangle
    corresponding to $t$.  The gray vertices are in a double
    dominating set $S$.}
  \label{fig:ds_46}
\end{figure}

\vspace{1em}
(Case~3-2) $d_{t} = 6$.
See Fig.~\ref{fig:ds_46}.

For each case in Fig.~\ref{fig:ds_46}, let
$G' = G - \{u_{1}, u_{2}, u_{3}, v_{1}, v_{2}, v_{3}, v_{4}, v_{5}\}$.
$G'$ has $n-8$ vertices and $k' \leq k-1$ bad vertices since $u_{5}$
is a good vertex in $G'$.  Thus, a minimum double dominating set $S'$
of $G'$ satisfies
$|S'| \leq (n'+k')/2 \leq (n+k-9)/2$,
and $S'$ includes $u_{4}, u_{6}, v_{6}$ and $v_{8}$ since
$\deg_{G'} u_{5} = \deg_{G'} v_{7} = 2$.  The set
$S = S' \cup \{u_{2}, u_{3}, v_{2}, v_{3}\}$ is a double dominating
set of $G$ and satisfies
$\gamma_{\times 2}(G) \leq |S| = |S'| + 4 \leq (n+k)/2$, a
contradiction.

%%%%%%%%%%%%%%%%%%%%%%%%%%%%%%%%%%%%%%%%%%%%%%%%%%%
\begin{figure}[tbp]
  \centering
  \includegraphics{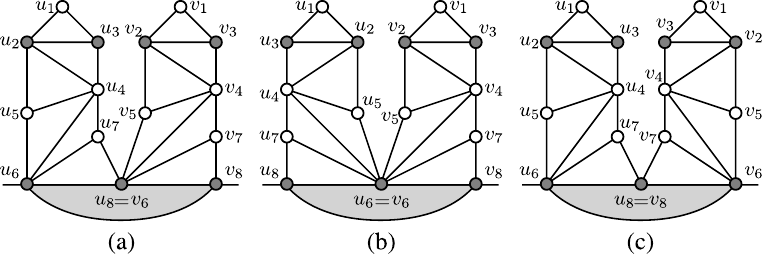}
  \caption{Case~4.  Possible situations when $d_s = 6$ and
    $d_{t} = 6$.  The shaded triangle is an internal triangle
    corresponding to $t$.  The gray vertices are in a double
    dominating set $S$.}
  \label{fig:ds_66}
\end{figure}

\vspace{1em}
\noindent
(Case~4) $d_{s} = d_{t} = 6$.
See Fig.~\ref{fig:ds_66}.

For each case in Fig.~\ref{fig:ds_66}, let
$G' = G - \{u_{1}, u_{2}, u_{3}, u_{4}, u_{5}, v_{1}, v_{2}, v_{3},
v_{4}, v_{5}\}$.  $G'$ has $n-10$ vertices and $k' \leq k$ bad
vertices.  Thus, a minimum double dominating set $S'$ of $G'$
satisfies
$|S'| \leq (n'+k')/2 \leq (n+k-10)/2$,
and $S'$ includes $u_{6}, u_{8}, v_{6}$ and $v_{8}$ since
$\deg_{G'} u_{7} = \deg_{G'} v_{7} = 2$.  The set
$S = S' \cup \{u_{2}, u_{3}, v_{2}, v_{3}\}$ is a double dominating
set of $G$ and satisfies
$\gamma_{\times 2}(G) \leq |S| = |S'| + 4 \leq (n+k)/2$, a
contradiction.

\vspace{1em}
From the above four cases, we complete the proof of
Theorem~\ref{thm:main}.

\vspace{1em}

Finally, we discuss the lower bound of the double domination number of
outerplanar graphs.
In the paper~\cite{aita24}, the author showed that the 2-domination
number of an outerplanar graph satisfies
$\gamma_{2}(G) \geq \lceil (n+2)/3 \rceil$.
The 2-dominating set $S$ of a graph $G$ is \emph{$2$-dominating set}
if, for any $v \notin S$, there exists at least two vertices adjacent
to $v$.
The \emph{2-domination number} $\gamma_{2}(G)$ is the minimum
cardinality of a 2-dominating set of $G$.
Since $\gamma_{2}(G) \leq \gamma_{\times 2}(G)$, this lower bound also
holds for the double dominating number.

To prove that this lower bound is tight, we construct infinite family
of maximal outerplanar graphs whose double domination number is equal
to the lower bound.
For any $k \geq 1$, let $G_{k}$ be a graph that has the vertex set
$V(G_{k}) = \{v_{1},v_{2},\dots,v_{3k+1}\}$, and
$E(G_{k}) = \{v_{i}v_{i+1} \mid i = 2,3,\dots,3k\} \cup \{v_{1} v_{i}
\mid i = 2,3,\dots,3k+1 \}$.

Let $S=\{v_{1}\} \cup \{v_{3i} \mid i=1,2,\dots,k\}$.
Then $S$ is a double dominating set of $G_{k}$, and $|S|=\lceil
(n+2)/3 \rceil=k+1$.
It is easy to see that there is no double dominating set $S'$ such
that $|S'| \leq k$.
Therefore, $\gamma_{\times 2}(G_{k}) = k+1$ for any $k \geq 1$.

% We show that $S$ is a minimum double dominating set of $G_{k}$.
% Assume to the contrary that there is a double dominating set $S'$ of
% $k$ vertices.
% For $j=1,2,\dots,k$, let $V_{j}=\{v_{3j-1},v_{3j},v_{3j+1}\}$.
% Then $V(G)=V_{0} \cup V_{1} \cup \dots \cup V_{k}$, where
% $V_{0}=\{v_{1}\}$.
% Since $|S'|=k$, we have $V_{j} \cap S' = \emptyset$ for some $j$.
% If $V_{j} \cap S' = \emptyset$ for some $j \geq 1$, then $S'$ is not a
% double dominating set.
% Hence $v_{1} \notin S'$.
% Since $|S'|=k$ and $S'$ is a dominating set, $|S' \cap V_{j}|=1$ for
% each $j=1,2,\dots,k$.
% However, it implies that $S'$ cannot be a total dominating set.

\section*{Acknowledgments}
\label{sec:acknowledgments}

This work was supported by JSPS KAKENHI Grant Number JP22K11898.

% \bibliographystyle{elsart-num-sort}
% \bibliography{araki}

\end{document}